\documentclass[12pt]{amsart}

\usepackage{graphics}
\usepackage{epstopdf}
\usepackage{graphicx}

\usepackage{latexsym}
\usepackage{amssymb}
\usepackage{euscript}

\def\mcc{M\raise.5ex\hbox{c}C}
\def\mccarthy{M\raise.5ex\hbox{c}Carthy}
\def\Hu{\H}
\def\sz{Szeg\Hu{o} }

\def\ie{{\it i.e. }}



\def\hi{H^\infty}


\def\z{\zeta}


\let\i=\infty

\def\la{\langle}
\def\ra{\rangle}
\def\={\ = \ }



\def\C{\mathbb C}

\def\D{\mathbb D}
\def\B{\mathbb B}

\def\inn{\ \in \ }

\def\dis{\displaystyle}

\def\be{\setcounter{equation}{\value{theorem}} \begin{equation}}
\def\ee{\end{equation} \addtocounter{theorem}{1}}
\def\beq{\begin{eqnarray*}}
\def\eeq{\end{eqnarray*}}

\def\bp{{\sc Proof: }}
\def\ep{{}{\hfill $\Box$} \vskip 5pt \par}

\def\bl{\begin{lemma}}
\def\el{\end{lemma}}
\def\bt{\begin{theorem}}
\def\et{\end{theorem}}
\def\bprop{\begin{prop}}
\def\eprop{\end{prop}}
\def\bd{\begin{definition}}
\def\ed{\end{definition}}
\def\br{\begin{remark}}
\def\er{\end{remark}}
\def\bexer{\begin{exercise}}
\def\eexer{\end{exercise}}
\def\bfig{\begin{figure}}
\def\efig{\end{figure}}

\newtheorem{theorem}{Theorem}[section]
\newtheorem{prop}[theorem]{Proposition}
\newtheorem{lemma}[theorem]{Lemma}
\newtheorem{cor}[theorem]{Corollary}

\newtheorem{question}[theorem]{Question}
\newtheorem{definition}[theorem]{Definition}

\theoremstyle{definition}
\newtheorem{example}[theorem]{Example}
\newtheorem{remark}[theorem]{Remark}

\renewcommand\S{\Sigma}

\def\htd{H^2_d}
\def\hts{{H^2(\Sigma)}}
\def\hths{{H^2_h(\Sigma)}}
\def\kd{k_d}
\def\bd{{\mathbb B}_d}
\newcommand{\mv}{{\mathcal M}_V}
\newcommand{\mw}{{\mathcal M}_W}
\newcommand{\cv}{\overline{V}}
\newcommand{\pv}{{\partial V}}
\newcommand{\cw}{\overline{W}}
\newcommand{\pw}{{\partial W}}
\newcommand{\cs}{\overline{\Sigma}}
\newcommand{\ps}{{\partial \Sigma}}
\newcommand\hS{{\hat \Sigma}}

\newcommand{\cA}{\mathcal{A}}

\newcommand{\cM}{\mathcal{M}}
\newcommand{\cF}{\mathcal{F}}
\newcommand{\cK}{\mathcal{K}}
\newcommand{\cT}{\mathcal{T}}
\newcommand{\Mult}{\operatorname{Mult}}
\newcommand{\spn}{\operatorname{span}}
\newcommand{\lel}{\left\langle}
\newcommand{\rir}{\right\rangle}
\newcommand{\mb}[1]{\mathbb{#1}}
\newcommand{\ol}{\overline}
\newcommand{\wot}{\textsc{wot}}
\renewcommand{\o}{\omega}
\newcommand{\MMM}{\Delta}

\begin{document}

\title[The Isomorphism Question]{On the isomorphism question for complete Pick multiplier algebras}

\author{Matt Kerr}
\thanks{Partially supported by National Science Foundation Grant
DMS 1068974}
\address
{Washington University\\
St. Louis, MO 63130\\
USA}
\email{mattkerr@math.wustl.edu}
\author{John E. M\raise.45ex\hbox{{c}}Carthy}
\thanks{Partially supported by National Science Foundation Grant
DMS 0966845}
\address{Washington University\\
St. Louis, MO 63130\\
USA}
\email{mccarthy@wustl.edu}
\author{
Orr Moshe Shalit}
\thanks{Partially supported by Israel Science Foundation Grant no. 474/12 and by the European Union Seventh Framework Programme ({\em FP7/2007-2013}) under grant agreement no. 321749}
\address{
Ben-Gurion University of the Negev\\
Beer-Sheva\\
ISRAEL}
\email{oshalit@math.bgu.ac.il}
\subjclass{30H50, 47B32}
\date{February 13, 2012}

\bibliographystyle{plain}
\begin{abstract}
Every multiplier algebra of an irreducible complete Pick kernel arises as the restriction algebra $\mv = \{f\big|_V : f \in \cM_d\}$, where $d$ is some integer or $\infty$, $\cM_d$ is the multiplier algebra of the Drury-Arveson space $H^2_d$, and $V$ is a subvariety of the unit ball. For finite $d$ it is known that, under mild assumptions, every isomorphism between two such algebras $\mv$ and $\mw$ is induced by a biholomorphism between $W$ and $V$. In this paper we consider the converse, and obtain positive results in two directions. The first deals with the case where $V$ is the proper image of a finite Riemann surface. The second deals with the case where $V$ is a  disjoint union of varieties. 
\end{abstract}

\maketitle

\section{Introduction and statement of the main results}

The general theme of this note is the following   question.

(Q) Given two specific function algebras $\mv$ and $\mw$
associated with two analytic varieties $V$ and $W$, 
if there is a biholomorphic equivalence $h: V \to W$, does composition
with $h$ induce an isomorphism from $\mw$ to $\mv$?

This type of question --- and its converse, whether isomorphism of the algebras implies biholomorphic equivalence of the varieties ---
 can be asked in many contexts, {\em e.g.} in algebraic geometry it is well known that two affine algebraic varieties are isomorphic if and only if the algebras of polynomial functions on the varieties are isomorphic
\cite[Cor. I.3.7]{har77}. Coming back to complex analysis, a theorem of L.~Bers \cite{bers48} says that two domains $V$ and $W$ in the plane are conformally equivalent if and only if the corresponding algebras of holomorphic functions $O(V)$ and $O(W)$ are isomorphic. Note that in the setting of Bers's theorem, (Q) is trivial and it is the converse that is the interesting part. In our
 setting, (Q) turns out to be harder than its converse (though the converse is by no means trivial).

Here we shall consider multiplier algebras of a certain Hilbert space associated with a variety $V$ in the unit ball $\bd$.

Let $\htd$ be the reproducing kernel Hilbert space on $\bd$, 
the unit ball of $\mb{C}^d$, with kernel
\[
\kd(z,w) \= \frac{1}{1 - \la z, w \ra} .
\]
This space was first introduced by S. Drury \cite{dru83}, and later studied
in \cite{po91, arv98, agmc_loc}.
Let $\cM_d$ be the multiplier algebra $\Mult(H^2_d)$ of $H^2_d$. 
Here $d$ is some integer or $\i$.

We shall use the word {\em variety} to mean a joint zero set in the ball of $\htd$ functions. That is, $V$ is a variety in our sense if there is some set $E \subseteq H^2_d$ such that
\[
V = \{z \in \mb{B}_d : \forall f \in E ,\, f(z) = 0\}. 
\]
By \cite[Theorem 9.27]{ampi} this is the same as being a joint zero set of functions in $\cM_d$. We shall denote by $\ol{V}$ the closure of $V$ in $\overline{\mb{B}}_d$. 

Following \cite[Section 5]{DRS12}, let us say that a variety $V$ is {\em irreducible} if for any regular point $\lambda \in V$,  and any non-empty ball $B$  centered at $\lambda$ and contained in $\mb{B}_d$, the intersection of  the zero sets of all multipliers vanishing on  $V\cap B$ is exactly $V$. 
Note that if $V$ cannot be written as the proper union of two analytic varieties {\em in the classical sense} then $V$ is irreducible according to our definition. Indeed, this follows from the fact that in this case the set of regular points of $V$ is a connected submanifold (see \cite[Section III.C]{gur}), thus any analytic function vanishing on a neighbourhood in $V$ of a regular point vanishes everywhere on $V$.

If $V$ is a variety we define 
\[
\cF_V = \ol{\spn}\{k_d(\cdot,\lambda) : \lambda \in V\}.
\]
The Hilbert space $\cF_V$ is naturally a reproducing kernel Hilbert space of 
functions on the variety $V$. We define 
\[
\mv = \{f\big|_V : f \in \cM_d\}. 
\]
By 
 \cite[Proposition 2.6]{DRS12},
$\cM_V$ is equal to $\Mult(\cF_V)$, and is completely isometrically isomorphic to the quotient of $\cM_d$ by the ideal
\[
J_V = \{f \in \cM_d : f\big|_V = 0\}. 
\]
By universality of $H^2_d$, every multiplier algebra of an irreducible complete Pick kernel is of the form $\mv$ for some $d$ and some variety $V \subseteq \mb{B}_d$ (see \cite{agmc_cnp}). 

Question (Q) in the setting just described was first considered by K.~Dav-idson, C.~Ramsey and O.~Shalit in \cite{DRS11} for the case where $V$ and $W$ are homogeneous varieties. Under additional technical assumptions it was shown that the answer to question (Q) is {\em yes}. The additional technical assumptions were later removed by M.~Hartz \cite{hartz2012}. In \cite{DRS12} question (Q) and its converse were treated in greater generality, and it was shown that, when $d<\infty$, the converse direction holds true for varieties which are finite unions of irreducible varieties plus a discrete variety. Moreover, it was shown that the answer to (Q) is positive in some very special cases, and that it is negative in some rather pathological cases. The goal of the present work is to obtain a positive answer to (Q) for a wide class of ``nice" varieties. 

In Theorem~\ref{thmd1} and Corollary~\ref{cord2}, we answer question (Q)  in the affirmative in the case that $V$ is a finite Riemann surface
with a finite number of pairs of points identified (under the assumption of sufficient regularity on the boundary).  
This generalizes work of D.~Alpay, M.~Putinar and V.~Vinnikov \cite{apv}, who 
proved the result when $V$ was the unit disk, and N.~Arcozzi, R. Rochberg and E. Sawyer \cite[Subsection 2.3.6]{ars08}, who proved the result for $V$ a planar domain. As a corollary we obtain an extension theorem: {\em every bounded holomorphic function on $V$ extends to a function in $\cM_d$} (see Corollary~\ref{cord3}).
We also prove that the converse to (Q) holds for $H^\infty$ of finite Riemann surfaces in Corollary~\ref{cord1}. 
Our work requires an understanding of Hardy spaces on finite Riemann surfaces --- we give 
background information on this in Section~\ref{secc}.

In Section \ref{secf}, we discuss varieties with a finite number of disjoint components, 
and find sufficient conditions for an isomorphism on each separate component to give
an isomorphism on the whole variety. For this we require some results on the maximal ideal spaces of $\cM_V$ and of its norm closed analogue $\cA_V$, results which are obtained in Section \ref{sece}.

\section{Isomorphism between $\mv$ and $\mw$}
\label{secb}

In this section, we shall assume that $d < \infty$.

Let us denote by $\MMM(\cM_V)$ the {\em character space} of $\cM_V$, that is, the space of all nonzero multiplicative linear functionals on $\cM_V$. 
All characters are unital and contractive, hence completely contractive (since every contractive functional of an operator space
is a complete contraction \cite[Proposition 3.8]{pau02}). Note that, since the algebras are semi-simple, every homomorphism $\alpha : \cM_V \rightarrow \cM_W$ is automatically norm continuous. Thus, if $\alpha : \cM_V \rightarrow \cM_W$ is a unital homomorphism, then there is an induced continuous map
\[
\alpha^* : \MMM(\mw) \rightarrow \MMM(\mv)
\]
defined by $\alpha^*(\rho) = \rho \circ \alpha$ for all $\rho \in \MMM(\mw)$. 

Let us denote by $z_1, \ldots, z_d$ the coordinate functions in $\mv$. Then as $(z_1, \ldots, z_d)$ is a row contraction and since all characters are completely contractive, $(\rho(z_1), \ldots, \rho(z_d))$ is in $\ol{\mb{B}_d}$ for every $\rho \in \MMM(\mv)$. This gives rise to a continuous map 
\[
\pi : \MMM(\mv) \rightarrow \ol{\mb{B}_d} . 
\]
For every $\lambda \in \mb{B}_d$ we call $\pi^{-1}(\lambda)$ {\em the fiber over $\lambda$}. In Section 3 of \cite{DRS12} it is shown that $\pi(\MMM(\mv)) \cap \mb{B}_d = V$, that the fiber over every $\lambda \in V$ is a singleton, and that $\pi$ identifies the space of \wot -continuous characters with $V$. 
If $V$ extends analytically across $\partial {\mb{B}_d}$, by Corollary~\ref{cor:pi_character} below $\pi(\MMM(\mv)) =  \ol{V}$. 
It is easy to see that $\pi(\MMM(\mv)) \supseteq  \ol{V}$ always holds. 

\begin{question}
Is $\pi(\MMM(\mv)) =  \ol{V}$ for all $V$?
\end{question}

\bt
\label{thmb1}
Let $V,W \subseteq \mb{B}_d$ be varieties, $d < \infty$. Suppose that $V$ and $W$ are each a union of finitely many varieties, each of which is either irreducible or discrete. Let $\alpha$ be an isomorphism between $\mv$ and $\mw$. 
Then $\alpha$ induces a holomorphic map $h : \mb{B}_d \to \mb{C}^d$ that restricts to a biholomorphism from $W$ onto $V$. The isomorphism $\alpha$ is implemented by composition with $h$. Moreover, if $\big[h_1, \ldots, h_d \big] = h$ are the components of $h$, then $h_i \in \cM_d$ for all $i=1, \ldots, d$. 
\et

\bp This theorem and some variants are treated in detail in Section 5 of \cite{DRS12}, but we would like to repeat some of the arguments. Consider the coordinate functions $z_i := z_i \big|_V$ in $\cM_V$.
For all $i$, $\alpha (z_i) \in \cM_W$, and is therefore the restriction to $W$ of multiplier $h_i \in \cM_d$: 
\[
\alpha(z_i) = h_i\big|_V .
\]
This gives rise to a holomorphic function
$h = \big[ h_1, \ldots, h_d \big] : \mb{B}_d \rightarrow \mb{C}^d$ with components in $\cM_{d}$. 

Fix $\lambda \in W$, and let $\rho_\lambda$ be the evaluation functional 
at $\lambda$ on $\cM_W$. 
Then $\alpha^*(\rho_\lambda)$ is a character in $\MMM(\cM_V)$. 
We now compute
\[
 [\alpha^*(\rho_\lambda)](z_i) = \rho_\lambda( \alpha(z_i)) = \alpha(z_i) (\lambda) .
\] 
So $\alpha^*(\rho_\lambda)$ is in the fiber over $(\alpha(z_1)(\lambda), \ldots, \alpha(z_d)(\lambda)) = h(\lambda)$. 

In \cite[Section 5]{DRS12} it is proved that $h(\lambda) \in V$, and therefore that  $\alpha^*(\rho_\lambda)$ is also an evaluation functional. 
Thus, for every $f \in \cM_V$ and every $\lambda \in W$, 
\begin{align*}
 \alpha(f) (\lambda) &= \rho_\lambda (\alpha(f)) 
 = \alpha^*( \rho_\lambda) (f) \\
 &= \rho_{h(\lambda)} (f)   =  (f \circ h)(\lambda) .
\end{align*}
Therefore $\alpha(f) = f \circ h$. 

Finally, since the same reasoning holds for $\alpha^{-1}$, $h$ is a biholomorphism. 
\ep

In general, the function $h$ appearing in the above theorem extends to a homeomorphism between the character spaces, but not to a homeomorphism between $\ol{V}$ and $\ol{W}$. Indeed, suppose that $V$ and $W$ are both interpolating sequences. Then $\cM_V$ and $\cM_W$ are isomorphic (both are isomorphic to $\ell^\infty$), and consequently $V$ and $W$ are biholomorphic, but $\ol{V}$ and $\ol{W}$ need not be homeomorphic. 

Let $\cA_V$ be the norm closure of the polynomials in $\cM_V$.

\begin{question}
When is $\cA_V = \mv \cap C(\ol{V})$?
\end{question}
We will see in Section \ref{sec:charAV} that always $\cA_V \subseteq \mv \cap C(\ol{V})$.

\bt
\label{thmb1_more}
Let $V,W \subseteq \mb{B}_d$ be varieties, $d < \infty$. Let $\alpha$ be an isomorphism between $\mv$ and $\mw$, and let $h$ be as in Theorem \ref{thmb1}. Then
 $\alpha$ maps $\cA_V$ into $\cA_W$,
(respectively  $\mv \cap C(\ol{V})$ into $\mw \cap C(\ol{W})$),
 if and only if $h_i\big|_W \in \cA_W$ (respectively, $\mw \cap C(\ol{W})$)  for all $i=1, \ldots, d$. 
Moreover, $\alpha$ is an isomorphism from  $\mv \cap C(\ol{V})$ onto $\mw \cap C(\ol{W})$ if and only if $h\big|_W$ extends to be a homeomorphism from $\cw$ to $\cv$. 
\et
\bp We use the notation of the proof of the above theorem. If $\alpha$ takes $\cA_V$ into $\cA_W$ then for all $i$, $h_i\big|_W = \alpha(z_i) \in \cA_W$ (similarly for $\mv \cap C(\ol{V})$ and $\mw \cap C(\ol{W})$). 

 Conversely, if $h_i\big|_W = \alpha(z_i) \in \cA_W$ for all $i$, then, since $\alpha$ is continuous, we have that $\alpha(\cA_V) \subseteq \cA_W$. 
If  $h_i\big|_W \in \mw \cap C(\ol{W})$, then $f \circ h$ is in $\mv \cap C(\ol{V})$ for every $f$ that is in  $ \mv \cap C(\ol{V})$, so $\alpha(\mv \cap C(\ol{V})) \subseteq \mw \cap C(\ol{W})$.

If $\alpha : \alpha(\mv \cap C(\ol{V})) \to \mw \cap C(\ol{W})$ is an isomorphism, then $h$ must be continuous, and
one-to-one, on $\ol{W}$, and so must $h^{-1}$, therefore $h$ is a homeomorphism.
\ep

\section{Background on finite Riemann surfaces}
\label{secc}

A connected {\em finite Riemann surface} $\S$ is a connected open
proper subset of some  compact Riemann surface such that the boundary $\partial \S$ is
also the boundary of the closure and is the union of finitely many disjoint simple closed
analytic curves. A general finite Riemann surface is a finite disjoint union of connected ones.
 Figure \ref{picc1} shows a picture of a connected finite Riemann surface with three
boundary circles.
We shall identify $\Sigma$ with its image in the double, $\hS$, the compact Riemann surface
obtained by attaching another copy of $\S$ along $\partial S$.

\begin{figure}[h]
\includegraphics[scale=0.45]{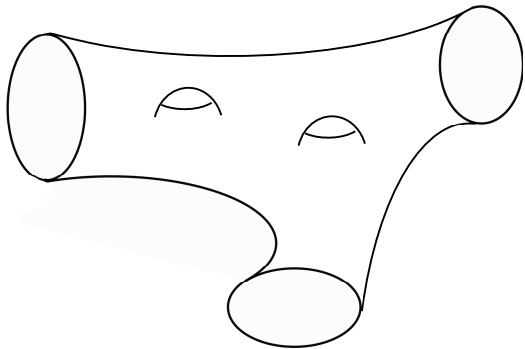} 
\centering
\caption{Finite Riemann Surface with two handles and three boundary components} 
\label{picc1} 
\end{figure}

Let $O(\Sigma)$ denote the algebra of all holomorphic functions on $\Sigma$, and $A(\Sigma)$ the 
holomorphic functions that extend continuously to $\partial \S$. Since $\S$ is not compact, there
is a plentiful supply of holomorphic functions; indeed, by a result of J. Wermer \cite{wer64}, the 
real parts of functions in $A(\S)$ form a finite codimensional subspace of the space of all
continuous real-valued functions on $\partial \S$. This means that in many important ways
function theory on $\S$ is like function theory on the unit disk, subject to a finite number of extra conditions. See for example the paper \cite{ahesar1}.

Let us fix some base-point $a$ in $\S$, and let $\o$ be the harmonic measure with respect to $a$,
\ie the measure on $\ps$ with the property that
\[
\int_\ps u(\zeta) d \omega(\zeta ) \= u(a) 
\]
for every function $u$ that is harmonic on $\S$ and continuous on $\cs$.

%

We shall define $H^2(\S)$ to be the closure in $L^2(\o)$ of $A(\S)$. Its multiplier algebra is
$H^\i(\S)$, the bounded analytic functions on $\S$.
Note that the norm in $H^2(\S)$ depends on the choice of base-point $a$, but the norm
in $H^\i(\S)$ does not --- it is the supremum of the modulus on $\S$ \cite{ahesar1}.

J.D. Fay \cite[Prop 6.15]{fay} gave an explicit formula for the \sz kernel for $H^2(\S)$, \ie
for the function $j(\z,\eta)$ that is conjugate-holomorphic in $\eta$,
as a function of $\z$ is holomorphic and  lies in $H^2(\S)$ for each fixed $\eta$, 
and satisfies
\be
\label{eq1a}
\int_\ps f(z) \overline{j(\z,\eta)} d \omega(\z)
\= f(\eta) \qquad \forall f \in H^2(\S) .
\ee

We need to know how $j$ behaves qualitatively near the boundary.
Fix some point $\eta$ in $\ps$, and let us introduce local coordinates.
Let $U$ be a small neighborhood of $\eta$ in $\hS$, and $V$ a neighborhood of an arc
of the unit circle in the complex plane, and $\phi : V \to U$ a biholomorphic map that
sends $V \cap \D$ to $U \cap \S$, and $V \cap \D^c$ to $U \cap \S^c$.
Let us write $\z = \phi(z)$ and $\eta = \phi(w)$. Let  $J(z,w) = j(\phi(z),\phi(w))$.

\begin{prop}
\label{corkey}
With notation as above, let $w_0 \in V \cap \partial \D$.
Then there is a neighborhood $V_0$ of $w_0$ that is contained in $V$ and so that
on $V_0 \times (V_0 \cap \partial \D) $ we have
\be
\label{eqd4}
J(z,w) \= \frac{G(z,w)}{w-z} 
\ee
where  $G$ is a holomorphic function that is  non-vanishing  on $V \times (V_0 \cap \partial \D)$.
\end{prop}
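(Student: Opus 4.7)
The plan is to use Fay's explicit formula for the \sz kernel \cite[Prop.~6.15]{fay}, which realises $j$ as a ratio of theta-like objects on the double $\hS$, and then read off the local behaviour near the boundary diagonal.

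Because $\ps$ consists of analytic curves, $\hS$ carries an anti-conformal involution $\tau$ whose fixed-point set is $\ps$. The sesqui-holomorphic kernel $j(\z,\eta)$ becomes holomorphic in both variables when viewed as $(\z,\eta)\mapsto j(\z,\tau(\eta))$ on $\S\times\tau(\S)$, and Fay's formula identifies this with a globally meromorphic function on $\hS\times\hS$ of the form $\Theta(\z,\eta)/E(\z,\eta)$, where the prime form $E$ vanishes simply on the diagonal and nowhere else, and $\Theta$ is holomorphic and non-vanishing in a neighbourhood of the boundary diagonal. In the chart $\phi$, the involution $\tau$ pulls back to the standard Schwarz reflection $w\mapsto 1/\bar w$ (which fixes $V\cap\partial\D$), and the prime form pulls back to $(w-z)\cdot u(z,w)$ for some holomorphic, nowhere-zero $u$ on a small bidisk $V_0\times V_0$ about $(w_0,w_0)$. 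Setting $G(z,w)=(w-z)\,J(z,w)$, so that after pullback $G=(\Theta\circ\phi)/u$, we obtain a holomorphic function on $V_0\times V_0$ with $G(w,w)\neq 0$ for $w\in V_0\cap\partial\D$, and the identity $J(z,w)=G(z,w)/(w-z)$ holds on $V_0\times(V_0\cap\partial\D)$.

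For the non-vanishing of $G$ on the larger set $V\times(V_0\cap\partial\D)$, off the diagonal this is equivalent to $J(z,w)\neq 0$, i.e., to the non-vanishing of $j(\phi(z),\phi(w))$ for $\phi(z)\in U$ and $\phi(w)\in U\cap\ps$. By Fay, the zero divisor of $\z\mapsto j(\z,\eta)$ on $\hS$ has degree equal to the genus of $\hS$, varies continuously in $\eta$, and stays in a compact subset of $\hS\setminus U$ for $\eta$ in a sufficiently small neighbourhood of $\phi(w_0)$ in $\ps$. After shrinking $V_0$ (and hence $U$) if necessary, this yields $G(z,w)\neq 0$ throughout $V\times(V_0\cap\partial\D)$.

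The main obstacle is this last step: controlling the zero divisor of $j(\cdot,\eta)$ uniformly as $\eta$ ranges over a boundary arc, which reduces to verifying that the theta-factor in Fay's formula does not vanish near the boundary diagonal --- a consequence of the reality properties of the theta data associated to the involution $\tau$. The simple-pole statement and the factorisation, by contrast, follow essentially formally once Fay's formula is in hand.
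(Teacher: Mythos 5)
Your proposal is correct and follows essentially the same route as the paper: the authors also derive Proposition~\ref{corkey} directly from Fay's explicit formula, using the simple zero of the prime form on the diagonal and the non-vanishing of the theta factors (and hence of $k(\z,\eta)=j(\z,\eta^*)$) on a small symmetric neighborhood of a boundary point. The paper simply cites the ``Moreover'' clause of Fay's theorem for the non-vanishing, whereas you supply the supporting argument about the zero divisor and the reality properties of the even half-period; this is a legitimate elaboration, not a different method.
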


Proposition~\ref{corkey} is a corollary of Fay's theorem, which gives an explicit formula for $j$ in terms of the theta function $\theta$ and the prime form $E$ of $\S$:

\bt [Fay]
\label{thmfay}
The reproducing kernel $j(\z,\eta)$ for $H^2(\S)$ is given by:

\be
\label{formfay2}
j(\z,\eta) \=
\frac{\theta(\eta^* - \z - e) \theta(a^* -a - e) E(\z,a^*) E(\eta^*,a)}{
\theta(a^*- \z - e)\theta(\eta^* -a - e) E(\eta^*,\z) E(a,a^*)}.
\ee
Moreover, if $U = U^*$ is a small symmetric neighborhood of some point in $\ps$,
then on $U \times U$, the function $k(\z,\eta) := j(\z,\eta^*)$ is meromorphic,
non-vanishing, and is holomorphic except for simple poles where $\eta = \z$.
\et

Here $e$ is a certain  even half-period, 
$E$ is the prime form, 
 which  in local coordinates has the expansion
\[
E(z,w) \=  \overline{ E(z^*, w^*)} \= \frac{ (z-w) + O(z-w)^2}{\sqrt{dz dw}} ,
\]
 $\theta$ is the theta function for $\hS$, and we use 
 $ \zeta^*$ to mean the involution on $\hS$
that swaps $\Sigma$
and $\hS \setminus \overline{\Sigma}$.
For details see Fay's monograph \cite{fay}.

A Toeplitz operator on $\hts$ is the operator of multiplication by a continuous function $f$
on $\ps$ followed by projection from $L^2(\sigma)$ onto $\hts$; we denote this operator by
$T_f$. The $C^*$-algebra
generated by all the Toeplitz operators is called the Toeplitz algebra and is denoted $\cT$. Let us denote the algebra of compact operators on $\hts$ by $\cK$. We  need to know that $\cT$ is an extension of $\cT \cap \cK$ by 
the continuous functions on $\ps$. This result is well-known, but we do not know of a convenient
reference, so we include a proof for completeness.

\bprop
\label{propt}
The quotient $\cT / (\cK \cap \cT)$ is isometrically isomorphic to $C(\ps)$ via the map $f \leftrightarrow T_f + \cK \cap \cT$.
\eprop
\bp
If $\Sigma$ is the disjoint union of $\Sigma_1, \ldots, \Sigma_k$, then every Toeplitz operator on $\hts$ decomposes into a direct sum of Toeplitz operators on $H^2(\Sigma_i)$, $i=1, \ldots, k$. Thus, we may assume that $\Sigma$ is connected.  

 For an inner function $B$ in $A(\S)$  (inner means that the modulus is $1$ on $\ps$), the codimension of
$B \hts$ in $\hts$  is equal to the number of zeroes of $B$ in $\S$, and hence finite.
It follows that for such a $B$, the operator $T_B$ is essentially normal,
because the kernel of $T_B T_{B^*} - T_{B^*} T_B$ is 
$B \hts$.

The Toeplitz algebra $\cT$ is irreducible. Indeed, any operator in
the commutant of $A(\Sigma)$ must be (multiplication by) a function
in  $H^\infty(\Sigma)$,
so the commutant of $\cT$ is contained in the intersection of
$H^\infty(\Sigma)$ with its complex conjugate, and since $\S$ is connected,
this reduces to the scalars.
Therefore $\cK \subseteq \cT$. 

A result of E. L. Stout \cite{sto66} is that every connected finite Riemann surface
can be embedded in $\C^3$ with a triple $B_1, B_2, B_3$ of inner functions in $A(\S)$. We may therefore assume that $\Sigma \subseteq \mb{D}^3$ and that $B_i$ is equal to the coordinate function $z_i$. 
Then $B_1, B_2, B_3$ generate $A(\Sigma)$, and these functions and their conjugates generate $C(\Sigma)$. Since $T_g T_f = T_{gf}$ for every $f \in A(\Sigma)$ and $g \in C(\ps)$, it follows that $T_{B_1}, T_{B_2}, T_{B_3}$ generate the C*-algebra $\cT$. It follows from $T_B T_{B^*} - T_{B^*}T_{B} \in \cK$ that $\cT / \cK$ is commutative. The spectrum of this commutative C*-algebra can be naturally identified with $\ps$, thus the result follows.
\ep

\section{Biholomorphic maps induce isomorphisms}

In \cite{apv}, Alpay, Putinar and Vinnikov showed
 that if $h: \D \to W$ is a biholomorphic unramified $C^2$ map that is transversal at the boundary,
then there is an isomorphism of multiplier algebras from ${\mathcal M}_\D = \hi(\D)$ to $\mw$.
Arcozzi, Rochberg and Sawyer extended this result by allowing $\D$ to be replaced by a finitely
connected planar domain \cite{ars08}. 
We extend one step further, to the case of a finite Riemann surface.
Note that we do not require $d$ to be finite.

In \cite{amhyp}, the following definition was introduced and studied:
\begin{definition}
\label{defa2}
A holomap is a proper holomorphic map $h$ from a 
Riemann surface $\Sigma$ 
into a bounded open set $U \subseteq\C^n$ such that
there is a finite subset $E$ of $\Sigma$ with the property that
 $h$ is non-singular
and injective on $\Sigma \setminus E$.
\end{definition}
If $h$ is a holomap from $\Sigma$ to $W$ and $X$ is a space of holomorphic functions on $\Sigma$, then by $X_h$ we mean the functions in $X$ that are
of the form $F \circ h$ for some holomorphic function $F$ on $W$
(thus we get $H^p_h(\Sigma), A_h(\Sigma)$, {\em etc}).

\bt
\label{thmd1}
Let $\S$ be a finite Riemann surface, and $W$ 
a variety in $\B_d$.
Let $h: \S \to \B_d$ be a holomap that maps $\S$ onto $W$,  is $C^2$ up to $\partial \S$, and
is one-to-one on $\partial \S$.
Assume that $h$ is transversal, \ie
\be
\label{eqd1}
\la D h(\zeta), h(\zeta) \ra \ \neq \ 0,\qquad \forall \zeta \inn \partial \S.
\ee
Then the map 
\[
\alpha: f \mapsto f \circ h
\]
is an isomorphism from $\cF_W$ onto $H^2_h(\S)$.
Consequently, the mapping $\phi \mapsto \phi \circ h$ implements an isomorphism of $\mw$ onto $H^\infty_h(\S)$.
\et

\begin{remark}
 The calculation of $D h$, the derivative of $h$,
depends on the choice of local coordinate, and another
choice will differ by a non-zero multiplicative factor (the derivative of the change of variable map).
This will not, however, affect whether or not 
$\la D h(\zeta), h(\zeta) \ra$ is non-zero.
\end{remark}
\begin{remark}
One can think of $W$ as the space $\Sigma$ with a finite number of pairs of points identified,
or limiting cases of this where one gets cusps. See \cite{amhyp} for a detailed description.
\end{remark}

\bp
The main idea of the proof goes back to \cite{apv}. To show that $\alpha$ given by the formula $f \mapsto f \circ h$ is a well defined bounded and invertible map from $\cF_W$ onto $\hths$, we will formally compute $\alpha^*$ and $R := \alpha \alpha^*$, and show that $R$ is an injective Fredholm operator. Being positive and Fredholm, injectivity implies invertibility, and the first claim in the theorem follows. A straightforward computation then shows that the asserted isomorphism between $\cM_W$ and $H^\infty_h(\S)$ is  the similarity induced by $\alpha$. 

For $f$ in $\hts$ (not necessarily in $\hths$) let us calculate $\alpha^*(f)$. This is the function on $W$ whose value at a point $w$ is given by
\[
\la \alpha^*f, k_d( \cdot, w) \ra_{H^2_d} \= 
\la f (\z), k_d(h(\z), w) \ra_{\hts}
\]
Let $j(\zeta,\eta) = j_\eta(\z)$ be the reproducing kernel for $H^2(\S)$, defined by
(\ref{eq1a}). We have that
\beq
\alpha^* j_\eta(w) &\=&
\la j_\eta (\z) , k_d(h(\z), w) \ra \\
&=& 
\frac{1}{1- \la w, h(\eta) \ra} \\
&=& 
k_d(w,h(\eta)) .
\eeq
Let $R = \alpha \alpha^*$. We have shown
\be
\label{eqd2}
Rf (\z) \= \int_{\ps} f(\eta) \frac{1}{1- \la h(\z), h(\eta) \ra} d\omega (\eta) .
\ee
Let us write 
\[
L(\z,\eta) \= \frac{1}{j(\zeta,\eta)} \frac{1}{1-\la h(\z), h(\eta)\ra} .
\]
For $\eta \in \ps$, let $L(\eta, \eta) = \lim_{\z \to \eta} L(\z,\eta)$.
Break $R$ up as 
\beq
R_1 f(\z) &\=&  \int_{\ps} f(\eta) j(\z,\eta) L(\eta,\eta) d\omega (\eta) \\
R_2  f(\z) &\=&  \int_{\ps} f(\eta) j(\z,\eta)\left[ L(\zeta, \eta) -  L(\eta,\eta)\right] d\omega (\eta).
\eeq

Fix some point $\eta$ in $\ps$, and let us introduce local coordinates.
Let $U$ be a small neighborhood of $\eta$ in $\hS$, and $V$ a neighborhood of an arc
of the unit circle in the complex plane, and $\phi : V \to U$ a biholomorphic map that
sends $V \cap \D$ to $U \cap \S$, and $V \cap \D^c$ to $U \cap \S^c$.
Let us write $\z = \phi(z)$ and $\eta = \phi(w)$. Let $H(z) = h(\phi(z))$
and $J(z,w) = j(\phi(z),\phi(w))$.

By Proposition~\ref{corkey}, since $h$ is $C^1$, we have
\begin{align*}
L(\eta,\eta) &= \lim_{z \to w} \frac{1}{J(z,w)}\frac{1}{1 - \lel H(z),H(w) \rir} \\
&= \lim_{z \to w} \frac{w-z}{G(w,w) }\frac{1}{\lel H(w),H(w) \rir - \lel H(z),H(w) \rir} \\
&=\lim_{z \to w}\frac{w-z}{G(w,w)\,  \lel H(w) - H(z), H(w)\rir} \\
&= \frac{1}{G(w,w)\, \la D H(w), H(w) \ra} 
\end{align*}
Therefore $R_1$ is a Toeplitz operator with a continuous non-vanishing symbol, and so 
is Fredholm by Proposition~\ref{propt}.

We claim that $R_2$ is Hilbert-Schmidt, hence compact, so $R$ is a positive Fredholm operator.
Indeed, $R_2$ is an integral operator, and will be Hilbert-Schmidt provided the function
\[
M(\zeta,\eta) :=  j(\z,\eta)\left[ L(\zeta, \eta) -  L(\eta,\eta)\right] 
\] 
is continuous on $\ps \times \ps$.
The function $M$ is clearly continuous when $\zeta$ stays away from $\eta$, so we just 
need to check the limit exists as $(\z, \eta) \in \ps \times \ps$ tends to the diagonal of $\ps \times \ps$. 

Let us use local coordinates as before.
Then we wish to calculate, for points $z,w$ and $u$ in the unit circle, 
\begin{align*}
&\lim_{(z,w) \to (u,u)} \left[
\frac{1}{1 - \la  H(z), H(w) \ra} 
-\frac{J(z,w)}{G(w,w) \, \la D H(w), H(w) \ra}\right] \\
&=
\lim_{(z,w) \to (u,u)} \left[
\frac{1}{\la H(w) - H(z), H(w) \ra} 
-\frac{G(z,w)/G(w,w)}{ (w-z) \la D H(w), H(w) \ra}\right]
\end{align*}
Using the assumption that $h$ (and therefore $H$) is $C^2$, we have that 
\[
H(z) = H(w) + (z-w) D H(w)  + (z-w)^2 \phi(z,w) 
\]
where $\phi$ is continuous. Moreover, we have
\[
G(z,w)/G(w,w) \= 1 +  (z-w)\psi(w) + O(z-w)^2 ,
\]
where 
\[
\psi(w)  \= \frac{1}{G(w,w)} \frac{\partial}{\partial z} G(z,w) |_{(w,w)}
\]
is continuous.

So the above limit becomes
\begin{align*}
\lim_{(z,w) \to (u,u)} & \frac{1}{w-z} \ \left[
\frac{1}{\la D H(w) - (w-z) \phi(z,w), H(w) \ra} \right.\\
&\left. -\ 
\frac{1 +(w-z) \psi(w) + O(w-z)^2}{ \la D H(w), H (w) \ra} \right]\\
& =
\lim_{(z,w) \to (u,u)}   \ \left[
\frac{\la \psi(w) D H(w) + \phi(z,w) , H(w) \ra}
{\la D H(u), H(u) \ra^2} \right] \\
&= \ \left[
\frac{\la \psi(u) D H(u) + \phi(u,u) , H(u) \ra}
{\la D H(u), H(u) \ra^2} \right] .
\end{align*}

So $M$, {\em a priori} defined only for $\z\neq \eta$, agrees
with a  continuous function on all of $\ps \times \ps$, as required.

Finally, suppose $f $ is in the kernel of $R$. By expanding $\dis \frac{1}{1-\la h(\z), h(\eta) \ra}$ in powers of $h(\zeta)$ and using (\ref{eqd2}), we see that
for all $\zeta \in \S$, 
\[
\sum_\gamma \frac{|\gamma|!}{\gamma!} h(\zeta)^\gamma \lel f, h^\gamma \rir = [\alpha \alpha^* f] (\zeta) = 0 .
\]
Taking the inner product of the function $\alpha \alpha^* f$ with $f$ we find
\[
\lel \alpha \alpha^* f , f \rir = \sum_\gamma \frac{|\gamma|!}{\gamma!} \lel f, h^\gamma \rir \lel h^\gamma , f \rir = 0 , 
\]
therefore $\lel f, h^\gamma \rir = 0$ for all $\gamma$. As polynomials in $h$ span $H_h^2(\S)$, this means $f \perp H_h^2(\S)$. It follows that $R$ is invertible, thus $\alpha$ is an isomorphism from $\cF_W$ onto $H^2_h(\S)$, as required. 

\ep

\begin{example}
Let $(a,b)$ be a vector in $C^2$ of length one, with neither coordinate zero,
and consider the map $h : \D \to \B_2$ given by
$h(z) = (a z^2, b z^3)$.
This is a holomap that is transversal. The algebra $H^\i_h$ consists of all bounded holomorphic
functions  on the disk whose derivatives vanish at the origin.
\end{example}

\begin{cor}
Let $W$ be as in Theorem \ref{thmd1}. Then $\cA_W = \mw \cap C(\ol{W})$. 
\end{cor}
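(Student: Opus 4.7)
The inclusion $\cA_W \subseteq \mw \cap C(\cw)$ is already established in Section~\ref{sec:charAV} (polynomials are continuous on $\ol{\B_d}$, and $\mw \cap C(\cw)$ is norm-closed in $\mw$), so I focus on the reverse inclusion. My plan is to push the problem to the Riemann surface $\S$ via the bicontinuous Banach algebra isomorphism $\alpha \colon \mw \to H^\infty_h(\S)$ of Theorem~\ref{thmd1}, where it becomes a concrete uniform polynomial approximation problem.

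Under $\alpha$, the coordinate function $z_i|_W$ is sent to $h_i|_\S$, so $\alpha(\cA_W)$ equals the sup-norm closure in $H^\infty_h(\S)$ of polynomials in $h_1|_\S, \ldots, h_d|_\S$. Next I would check that $\alpha$ restricts to an isomorphism of $\mw \cap C(\cw)$ onto $A_h(\S) := H^\infty_h(\S) \cap C(\cs)$. The easy direction is just continuity of $h$ on $\cs$; for the other, $h : \cs \to \cw$ is a continuous surjection of compact Hausdorff spaces, injective on $\ps$ and identifying only the finitely many interior pairs in the ramification set, hence a topological quotient map, and by the universal property of quotients any $F \in H^\infty_h(\S) \cap C(\cs)$ descends to a continuous function on $\cw$. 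With these two identifications, the corollary reduces to the claim that polynomials in $h_1|_\cs, \ldots, h_d|_\cs$ are sup-dense in $A_h(\S)$, equivalently, that coordinate polynomials are uniformly dense in $A(\cw) := \{F \in C(\cw) : F|_W \text{ holomorphic}\}$.

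To prove this density I would proceed in three stages. First, a Mergelyan/Bishop-type theorem on the compact bordered Riemann surface $\cs \subset \hS$ approximates any $f = F \circ h \in A_h(\S)$ uniformly on $\cs$ by functions $f_n$ holomorphic in open neighborhoods $U_n \supset \cs$ in $\hS$ (the hypothesis that $\hS \setminus \cs$ has no relatively compact components is automatic). Second, I would correct $f_n$ by subtracting a combination of finitely many meromorphic functions on $\hS$, chosen so that the corrected $\tilde f_n$ satisfies $\tilde f_n(p_j) = \tilde f_n(q_j)$ at each of the finitely many identified pairs $(p_j,q_j)$; since the node discrepancies $f_n(p_j) - f_n(q_j)$ tend to zero, the correction vanishes uniformly, and $\tilde f_n$ descends to a function $\tilde F_n$ holomorphic in a neighborhood of $\cw$ in $\C^d$. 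Third, I would invoke polynomial convexity of $\cw$ (available in our setting via classical theorems of Stolzenberg and Alexander on bordered analytic curves whose boundaries lie transversally on $\partial \B_d$, conditions guaranteed by (\ref{eqd1}) and the $C^2$-smoothness of $h$ up to $\ps$) together with the Oka--Weil theorem to approximate $\tilde F_n$ uniformly on $\cw$ by polynomials in $z_1,\ldots,z_d$. Pulling the approximants back through the bounded map $\alpha^{-1}$ converts sup-norm convergence on $\cs$ into $\mw$-norm convergence to $F$.

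The main obstacle is the polynomial convexity step: it is a nontrivial classical input, and its application is slightly delicate at the nodes of $\cw$, where $\cw$ is singular as a variety in $\C^d$. The hypotheses of Theorem~\ref{thmd1} are exactly what is needed to place $\cw$ in a setting where the Stolzenberg/Alexander circle of ideas applies; the fact that the identifications happen at interior points of $\Sigma$ and not on $\ps$ ensures that the boundary structure of $\cw$ remains smooth and transversal to $\partial \B_d$, which is where polynomial convexity is decided.
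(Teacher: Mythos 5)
Your reduction coincides with the paper's: Theorem \ref{thmd1} transports everything to $\S$, where $\cA_W$ corresponds to the uniform closure of $\{p\circ h : p \text{ a polynomial}\}$ and $\mw\cap C(\cw)$ corresponds to $A_h(\S)=H^\infty_h(\S)\cap C(\cs)$ (your quotient-map argument for the latter is the content of the appeal to Theorem \ref{thmb1_more}; note that properness of $h$ keeps fibers over $\cw$ from mixing interior and boundary points, so the descent is legitimate). Where you genuinely diverge is that the paper simply \emph{asserts} the key density statement --- that $A_h(\S)$ equals the uniform closure of polynomials in $h_1,\dots,h_d$ --- whereas you supply an argument for it: Mergelyan--Bishop approximation on the bordered surface $\cs$, a finite-rank correction at the exceptional points, and polynomial convexity of $\cw$ plus Oka--Weil. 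This is the real content of the corollary, and your route makes explicit what the paper's two-sentence proof leaves implicit. Two caveats on your added material. First, Definition \ref{defa2} permits $h$ to be \emph{singular} on the finite exceptional set, not merely non-injective, so membership in $A_h(\S)$ can impose jet (derivative) conditions at cusps rather than only value-matching at identified pairs; your correction scheme should match finite jets there --- still a finite-dimensional linear problem, but it needs to be said. Second, polynomial convexity of $\cw$ is the load-bearing classical input: one must know that the polynomial hull of the curves $\pw\subset\partial\B_d$ contains no analytic structure beyond $\cw$ itself (a uniqueness statement for the variety bounded by $\pw$, in the Wermer--Stolzenberg--Alexander circle), which does not follow formally from transversality; you cite rather than prove this, but you flag it honestly, and with those two points addressed the argument goes through.
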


\bp
On the one hand, $A_h(\Sigma) = H^\infty_h(\Sigma) \cap C(\ol{\Sigma})$, and this algebra is mapped by composition with $h$ isomorphically onto $\mw \cap C(\ol{W})$ (as in Theorem \ref{thmb1_more}). On the other hand, $A_h(\Sigma)$ is equal to the norm closure of $\{\textrm{polynomials} \} \circ h$ in $H^\infty_h$, and this algebra is mapped by composition with $h$ isomorphically onto $\cA_W$. The result follows. 
\ep

\begin{cor}
\label{cord1}
Let $\S_1$ and $\S_2$ be finite Riemann surfaces. 
Then $H^\i(\S_1)$ and  $H^\i(\S_2)$ are isomorphic if and only if $\S_1$ and $\S_2$ are
biholomorphic.
\end{cor}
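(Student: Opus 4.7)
The ``if'' direction is trivial: a biholomorphism $\Phi:\Sigma_1\to\Sigma_2$ gives the isomorphism $f\mapsto f\circ\Phi$ from $H^\infty(\Sigma_2)$ onto $H^\infty(\Sigma_1)$. For the ``only if'' direction, the plan is to embed each $\Sigma_i$ as a variety $W_i$ in some Drury--Arveson ball, use Theorem~\ref{thmd1} to identify $\cM_{W_i}\cong H^\infty(\Sigma_i)$, transfer the abstract isomorphism to an isomorphism $\cM_{W_1}\cong\cM_{W_2}$, and then invoke Theorem~\ref{thmb1} to produce a biholomorphism $W_2\to W_1$ and hence $\Sigma_2\to\Sigma_1$.

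First I would reduce to the connected case. For connected $\Sigma$, $H^\infty(\Sigma)$ is an integral domain and so has no non-trivial idempotents; for disconnected $\Sigma=\bigsqcup_k\Sigma_k$ the minimal central idempotents of $H^\infty(\Sigma)=\prod_k H^\infty(\Sigma_k)$ are precisely the characteristic functions of the $\Sigma_k$. An algebra isomorphism $H^\infty(\Sigma_1)\cong H^\infty(\Sigma_2)$ must preserve this intrinsic list of minimal central idempotents, so it induces a bijection of components together with isomorphisms on each matched pair, reducing everything to the connected case.

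Assume now $\Sigma_1,\Sigma_2$ are connected. As in the proof of Proposition~\ref{propt}, Stout's theorem supplies three inner functions $B_1,B_2,B_3\in A(\Sigma_i)$ giving an injective analytic map $\ol{\Sigma}_i\hookrightarrow\ol{\D}^3$. Setting $h_i:=(B_1,B_2,B_3)/\sqrt{3}$ yields an injective holomap $h_i:\ol{\Sigma}_i\to\ol{\B}_3$, analytic up to $\partial\Sigma_i$, with $W_i:=h_i(\Sigma_i)\subset\B_3$ and $h_i(\partial\Sigma_i)\subset\partial\B_3$. For transversality, note that $|h_i|^2\equiv 1$ on $\partial\Sigma_i$ and $|h_i|^2<1$ on $\Sigma_i$; the Hopf boundary-point lemma then forces the outward normal derivative of $|h_i|^2$ to be strictly positive on $\partial\Sigma_i$, and unpacking this in a local holomorphic coordinate where $\partial\Sigma_i$ is an arc of $\partial\D$ gives $\la Dh_i(\zeta),h_i(\zeta)\ra\neq 0$ for every $\zeta\in\partial\Sigma_i$. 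Theorem~\ref{thmd1} now delivers $\cM_{W_i}\cong H^\infty(\Sigma_i)$ via composition with $h_i$. Each $W_i$ is irreducible in the sense recalled before Section~\ref{secb}, since it is the biholomorphic image of a connected Riemann surface and so its regular locus is connected. Theorem~\ref{thmb1} applied to the transferred isomorphism $\cM_{W_1}\cong\cM_{W_2}$ produces a biholomorphism $W_2\to W_1$, which pulled back through $h_1$ and $h_2$ becomes the desired biholomorphism $\Sigma_2\to\Sigma_1$.

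The main obstacle I anticipate is the transversality verification for the Stout embedding; I would handle it exactly by the Hopf-lemma calculation sketched above, written out in a local coordinate patch in which $\partial\Sigma_i$ is an arc of the unit circle. A secondary technical point is confirming that $W_i$ really is a \emph{variety} in the sense of Section~\ref{secb} (a joint zero set of $H^2_d$ functions); this should be routine because $W_i$ is an analytic one-dimensional submanifold of $\B_3$ that extends smoothly across $\partial\B_3$.
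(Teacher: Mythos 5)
Your proposal is correct and follows essentially the same route as the paper: Stout's embedding by three inner functions (normalized by $1/\sqrt{3}$), verification of transversality at $\partial\S$, then Theorem~\ref{thmd1} to identify $\cM_{W_i}$ with $H^\infty(\S_i)$ and Theorem~\ref{thmb1} to extract the biholomorphism. The only differences are cosmetic: the paper checks transversality by reflecting each $B_i$ across $\partial\S$ via Schwarz reflection and computing $\frac{\partial}{\partial r}|B_i|^2>0$ term by term, whereas you apply the Hopf boundary-point lemma to $|h_i|^2$ all at once (both yield $\Re\, z\,\la Dh(\zeta),h(\zeta)\ra>0$), and your explicit reduction to connected surfaces via minimal idempotents is a careful touch the paper leaves implicit.
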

\bp
Using Stout's result (mentioned in the proof of Proposition \ref{propt}), and replacing each inner function $B_i$ with $\frac{1}{\sqrt{3}} B_i$,
one can embed any finite Riemann surface $\S$ in $\B_3$ with a map $h$.
By the Schwarz reflection principle, each $B_i$ continues analytically across $\partial \S$,
and
 maps some neighborhood $U$ of $\overline{\S}$ in $\hS$ into $\C$
 in such a way that
$B_i(U \setminus \overline{\S} ) \subset \C \setminus \overline{\D}$.
Therefore $B_i$ cannot have a vanishing derivative anywhere on $\ps$.

Let $\zeta \in \ps$, and let $z$ be a local coordinate around $\zeta$ as in the proof of Theorem \ref{thmd1} so that $z(\zeta) = 1$. Since $|B_i(z)|^2$ is increasing as one crosses $\partial \S$, we have $\frac{\partial}{\partial r} |B_i(z)|^2 \big|_{\zeta} > 0$. But
\[
\frac{\partial}{\partial r} |B_i(z)|^2 = \frac{z}{r}B_i'\ol{B_i} + \frac{\ol{z}}{r} B_i \ol{B_i'} = 2 \Re \frac{z}{r}B_i'\ol{B_i}.
\]
It follows that $\Re B_i'\ol{B_i} > 0$ for all $i$, therefore 
\[
\la D h(\zeta), h(\zeta) \ra \= \sum_{i=1}^3 B_i^\prime(\z) \overline{B_i(\z)} \  \neq \ 0 ,
\] 
and $h$ is transversal.

So we can assume that each of $\S_1$ and $\S_2$ are embedded transversally in $\B_3$
by maps that are $C^\infty$ up to the boundary. 
By Theorem~\ref{thmd1}, the multiplier algebras $\cM_{\S_1}$
and $\cM_{\S_2}$ are isomorphic to  $H^\i(\S_1)$ and  $H^\i(\S_2)$ , respectively.
Therefore by  
Theorem~\ref{thmb1},
if $H^\i(\S_1)$ and  $H^\i(\S_2)$ are isomorphic, then
 $\S_1$ and $\S_2$ are
biholomorphic.

The converse is immediate. \ep

We can now answer question (Q) in the affirmative for a wide class of one dimensional varieties when the biholomorphism is sufficiently regular.

\begin{definition}
Let $g : W \rightarrow V$ be a holomorphic map that is $C^1$ up to $\pw$ and that maps $\pw$ onto $\pv$. We say that $g$ is {\em transversal on $\pw$} if 
for every $w \in \pw$ and every analytic curve $\alpha: \mb{D} \rightarrow W$  which extends to a $C^1$ homeomorphism from $\mb{D} \cup \{1\}$ onto $\alpha(\mb{D}) \cup \{w\}$, 
\[
\lel D (g\circ \alpha)(1), g \circ \alpha (1) \rir \neq 0 .
\]
\end{definition}

\begin{cor}
\label{cord2}
Let $\Sigma,W$ and $h$ be as in Theorem \ref{thmd1}. Let $V$ be a
variety in $\mb{B}_{d}$, and let $g : W \rightarrow V$ be biholomorphism that extends to be $C^2$, one-to-one and transversal on $\partial W$.
Then the mapping $\phi \mapsto \phi \circ g$ implements an isomorphism of $\mv$ onto $\mw$. 
\end{cor}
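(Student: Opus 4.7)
The strategy is to apply Theorem \ref{thmd1} twice: once to $h \colon \Sigma \to W$ as given, and once to the composition $g \circ h \colon \Sigma \to V$, and then to compare the two resulting similarities with $H^\infty$-type algebras on $\Sigma$. Concretely, the plan is to verify that $g \circ h$ satisfies all the hypotheses of Theorem \ref{thmd1} as a map $\Sigma \to \mb{B}_d$ whose image is $V$, so that composition with $g \circ h$ gives an isomorphism $\mv \to H^\infty_{g\circ h}(\Sigma)$; combined with the isomorphism $\mw \to H^\infty_h(\Sigma)$ already furnished by Theorem \ref{thmd1}, one would then check that $H^\infty_{g\circ h}(\Sigma)$ and $H^\infty_h(\Sigma)$ coincide as subalgebras of $H^\infty(\Sigma)$, and that the two isomorphisms intertwine the composition-with-$g$ map with the identity.

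To verify that $g \circ h$ is a holomap into $\mb{B}_d$ with image $V$: properness follows by writing $(g\circ h)^{-1}(K) = h^{-1}(g^{-1}(K \cap V))$ for $K \subset \mb{B}_d$ compact and using that $g$ is a biholomorphism extending continuously to the closures and $h$ is proper; the exceptional set for $g \circ h$ is still the finite set $E$ for $h$, since $g$ is bijective and regular on $W$. Since $h$ is $C^2$ up to $\partial \Sigma$ with $h(\partial \Sigma) \subset \partial W$ and $g$ is $C^2$ up to $\partial W$, the composition is $C^2$ up to $\partial \Sigma$; injectivity of $g \circ h$ on $\partial \Sigma$ follows from the corresponding properties of $h$ and $g$ separately.

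The most delicate point is transferring transversality. Fix $\zeta \in \partial \Sigma$ and, as in the proof of Theorem \ref{thmd1}, choose a local biholomorphism $\phi$ from a neighborhood $V_0$ of $w_0 \in \partial \mb{D}$ onto a neighborhood of $\zeta$ in $\hat{\Sigma}$, with $\phi(V_0 \cap \mb{D}) \subset \Sigma$ and $\phi(w_0) = \zeta$. Since $h$ is non-singular and one-to-one near $\partial \Sigma$ (by the transversality hypothesis on $h$ together with injectivity on $\partial \Sigma$), shrinking $V_0$ if necessary makes $h \circ \phi$ an embedding of $V_0 \cap \mb{D}$ into $W$. Choose a biholomorphism $\eta \colon \mb{D} \to V_0 \cap \mb{D}$ that extends $C^\infty$ to $\mb{D} \cup \{1\}$ with $\eta(1) = w_0$, and set $\alpha = h \circ \phi \circ \eta \colon \mb{D} \to W$. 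Then $\alpha$ is an analytic curve with $\alpha(1) = h(\zeta) \in \partial W$ satisfying the hypothesis in the definition of transversality of $g$. Applying that hypothesis and the chain rule
\[
D(g\circ \alpha)(1) \= D(g\circ h)(\zeta)\,\cdot\, D\phi(w_0)\,\cdot\,D\eta(1),
\]
with the two rightmost factors nonzero scalars, one concludes $\la D(g\circ h)(\zeta), (g\circ h)(\zeta)\ra \neq 0$, so $g\circ h$ satisfies \eqref{eqd1}.

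Once Theorem \ref{thmd1} is applied to both $h$ and $g \circ h$, identification of the two target algebras is routine: because $g \colon W \to V$ is a biholomorphism, pre-composition with $g$ is a bijection $H^\infty(V) \to H^\infty(W)$, so $H^\infty_{g\circ h}(\Sigma) = H^\infty_h(\Sigma)$. Under the resulting identifications, $\phi \in \mv$ is sent first to $\phi \circ (g \circ h)$ and then read as $(\phi \circ g) \circ h$, which is precisely the image of $\phi \circ g \in \mw$ under composition with $h$. Thus $\phi \mapsto \phi \circ g$ corresponds to the identity and is an isomorphism $\mv \to \mw$. The main technical obstacle is the transversality check above; the rest of the argument is bookkeeping.
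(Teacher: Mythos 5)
Your proof follows the paper's argument exactly: apply Theorem \ref{thmd1} to both $h$ and $g\circ h$, observe that $H^\infty_{g\circ h}(\Sigma)=H^\infty_h(\Sigma)$ because $g$ is a biholomorphism, and read off that composition with $g$ is the induced isomorphism $\mv\to\mw$. The only difference is that you supply the verification that $g\circ h$ inherits the hypotheses of Theorem \ref{thmd1} (in particular transversality, via a local-coordinate curve and the chain rule), a step the paper asserts in one line without proof; your verification is correct.
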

\bp
By Theorem \ref{thmd1}, $\mw$ is isomorphic to $H^\infty_h(\Sigma)$ via the composition map. Since $g$ is transversal on $\pw$, it follows that $g \circ h$ is transversal, so Theorem \ref{thmd1} implies that $\mv$ is isomorphic to $H^{\infty}_{g\circ h}(\Sigma)$ via the composition map. But $H^{\infty}_{g\circ h}(\Sigma) = H^{\infty}_{h}(\Sigma)$, therefore the mapping $\phi \mapsto \phi \circ (g \circ h) \circ h^{-1} = \phi \circ g$ is an isomorphism from $\mv$ to $\mw$. 
\ep

To conclude this section, we want to emphasize the following application to the extension problem of bounded holomorphic functions, which is a corollary of Theorem \ref{thmd1} (this type of result was also observed, in lesser generality, in \cite{apv,ars08}). 

\begin{cor}
\label{cord3}
Let $W$ be as in Theorem \ref{thmd1}. Then $\mw = H^\infty(W)$, and every bounded analytic function on $W$ extends to a multiplier in $\cM_d$, and in particular to a bounded holomorphic function on the ball. Moreover, the extension operator is bounded. 
\end{cor}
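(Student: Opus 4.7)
The plan is to chase identifications: combine the algebra isomorphism of Theorem~\ref{thmd1} with a tautological identification of $H^\infty(W)$ with $H^\infty_h(\S)$ via pullback along $h$.

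First, I would verify that $F \mapsto F \circ h$ is a bijective isometry from $H^\infty(W)$ (with the sup norm on $W$) onto $H^\infty_h(\S)$ (with the sup norm inherited from $H^\infty(\S)$). Well-definedness is clear; injectivity follows because $h(\S) = W$; surjectivity onto $H^\infty_h(\S)$ is by definition of the latter; and if $f = F \circ h$ then
\[
\|F\|_{H^\infty(W)} = \sup_{\zeta \in \S} |F(h(\zeta))| = \|f\|_{H^\infty(\S)},
\]
which simultaneously forces $F$ to be bounded and confirms the isometry. Composing the inverse of this identification with the isomorphism $\phi \mapsto \phi \circ h$ from Theorem~\ref{thmd1} yields a bijective algebra map $\mw \to H^\infty(W)$ which, on the level of underlying functions on $W$, is the identity. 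Hence $\mw = H^\infty(W)$ as algebras of functions on $W$.

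Second, to upgrade this to a bounded extension operator, I would apply the open mapping theorem to Theorem~\ref{thmd1}: the isomorphism of Banach algebras $\mw \to H^\infty_h(\S)$ has a bounded two-sided inverse, so there exists $C > 0$ such that $\|\phi\|_\infty \le \|\phi\|_\mw \le C\|\phi\|_\infty$ for every $\phi \in \mw$ (the left inequality is automatic from the contractivity of point evaluations on $\mw$). Since by \cite[Proposition 2.6]{DRS12} the algebra $\mw$ is completely isometrically isomorphic to the quotient $\cM_d / J_W$, for every $f \in H^\infty(W) = \mw$ one can select an extension $\tilde f \in \cM_d$ with $\tilde f\big|_W = f$ and $\|\tilde f\|_{\cM_d} \le 2 \|f\|_\mw \le 2C \|f\|_\infty$. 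In particular $\tilde f$ is a bounded holomorphic function on $\B_d$, and the map $f \mapsto \tilde f$ has norm at most $2C$.

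The argument has no genuinely hard step: the only delicate point is the double invocation of the open mapping theorem needed to convert the abstract Banach algebra isomorphism of Theorem~\ref{thmd1} into quantitative norm equivalence, after which the extension property is immediate from the definition of $\mw$ as a restriction algebra of $\cM_d$.
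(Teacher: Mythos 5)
Your proof is correct and is exactly the deduction the paper intends: the paper states Corollary~\ref{cord3} as an immediate consequence of Theorem~\ref{thmd1} and supplies no written proof, and your argument — the tautological isometric identification of $H^\infty(W)$ with $H^\infty_h(\S)$ via pullback along $h$, combined with the isomorphism $\mw \cong H^\infty_h(\S)$ to get the set equality and the norm equivalence $\|\phi\|_\infty \le \|\phi\|_{\mw} \le C\|\phi\|_\infty$, followed by the quotient-norm description of $\mw = \cM_d/J_W$ to produce a norm-controlled extension — is the natural way to fill in the details. No gaps.
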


\section{The character space of $\cA_V$} \label{sec:charAV}
\label{sece}

In this and the next section, we assume that $d < \infty$.
Denote by $\cA_d$ the norm closure of the polynomials in $\cM_d$.

\begin{lemma}\label{lem:O}
For every $\epsilon > 0$ 
\[
O((1+\epsilon)\mb{B}_d) \subseteq \cA_d. 
\]
\end{lemma}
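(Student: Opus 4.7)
The plan is to expand any $f \in O((1+\epsilon)\bd)$ in its homogeneous Taylor decomposition $f = \sum_{k \geq 0} f_k$, where $f_k$ is the homogeneous part of degree $k$, and to show that $\sum_k\|f_k\|_{\cM_d} < \infty$. Since each partial sum $\sum_{k \leq N}f_k$ is a polynomial, hence in $\cA_d$, and $\cA_d$ is closed in the $\cM_d$-norm, this will force $f$ itself to lie in $\cA_d$.

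First I would extract a geometric decay rate for $\|f_k\|_{H^\infty(\bd)}$ from the hypothesis. For any $r \in (1,1+\epsilon)$, setting $M := \|f\|_{H^\infty(r\bd)}$, the Cauchy-type formula $f_k(z) = \frac{1}{2\pi}\int_0^{2\pi}e^{-ik\theta}f(e^{i\theta}z)\,d\theta$ gives $|f_k(z)| \leq M$ for $z \in r\bd$, and homogeneity then yields $\|f_k\|_{H^\infty(\bd)} \leq M r^{-k}$.

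Next I would bound $\|f_k\|_{\cM_d}$ by a polynomial in $k$ times $\|f_k\|_{H^\infty(\bd)}$. The key building block is the identity $\|z^\alpha\|_{\cM_d} = \sqrt{\alpha!/|\alpha|!}$, which I would prove by realizing $M_{z^\alpha}$ as a weighted shift with respect to the orthonormal basis $\{\sqrt{|\beta|!/\beta!}\,z^\beta\}_\beta$ of $\htd$ and checking, via the elementary multinomial inequality $\prod_i\binom{\alpha_i+\beta_i}{\alpha_i}\leq\binom{|\alpha|+|\beta|}{|\alpha|}$, that the shift weights are maximized at $\beta=0$. Writing $f_k=\sum_{|\alpha|=k}c_\alpha z^\alpha$, the triangle inequality followed by Cauchy--Schwarz over the $N_k:=\binom{k+d-1}{d-1}$ multi-indices of length $k$ then gives
\[
\|f_k\|_{\cM_d}\ \leq\ \sum_{|\alpha|=k}|c_\alpha|\sqrt{\alpha!/k!}\ \leq\ \sqrt{N_k}\,\|f_k\|_{\htd}.
\]
Combining this with the standard ratio $\|f_k\|_{\htd}=\sqrt{N_k}\,\|f_k\|_{L^2(\partial\bd,\sigma)}$ for degree-$k$ homogeneous polynomials (a direct weight computation, with $\sigma$ the normalized surface measure) and the trivial bound $\|f_k\|_{L^2(\sigma)}\leq\|f_k\|_{H^\infty(\bd)}$ would yield $\|f_k\|_{\cM_d}\leq N_k\,\|f_k\|_{H^\infty(\bd)}$.

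Putting the two estimates together gives $\sum_k\|f_k\|_{\cM_d}\leq M\sum_k N_k r^{-k}<\infty$, because $N_k=O(k^{d-1})$ grows polynomially while $r^{-k}$ decays exponentially; hence $\sum_k f_k$ converges absolutely in $\cM_d$-norm to $f$, which is therefore in $\cA_d$. The main obstacle is the sharp monomial bound $\|z^\alpha\|_{\cM_d}\leq\sqrt{\alpha!/|\alpha|!}$: the cruder estimate $\|z^\alpha\|_{\cM_d}\leq 1$ coming from $(M_{z_1},\ldots,M_{z_d})$ being a row contraction, combined even with the best Cauchy estimates for $|c_\alpha|$, falls short by a factor of $d^{k/2}$ and fails for small $\epsilon$ or large $d$; it is the weighted-shift analysis that supplies the missing factor and makes the method work uniformly in $\epsilon>0$.
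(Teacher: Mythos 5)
Your proof is correct and complete, but it is considerably more elaborate than the paper's, which is a one-line appeal to the row-contraction property of $(M_{z_1},\dots,M_{z_d})$ together with the absolute convergence of the Taylor series of $f$ on $(1+\epsilon)\mb{B}_d$. Your closing remark puts your finger on exactly why that one-liner, read naively, is not enough: the crude bound $\|z^\alpha\|_{\cM_d}\le 1$ plus absolute pointwise convergence does not yield $\sum_\alpha|c_\alpha|\,\|z^\alpha\|_{\cM_d}<\infty$ when $d\ge 2$ and $\epsilon$ is small, since no point of $(1+\epsilon)\mb{B}_d$ has all coordinates of modulus at least $1$. The standard way to make the row-contraction argument precise is to work with homogeneous components and use $\sum_{|\alpha|=k}\frac{k!}{\alpha!}M_z^\alpha M_z^{*\alpha}\le I$ (the $k$-th iterate of the row-contraction inequality), which by Cauchy--Schwarz gives $\|f_k\|_{\cM_d}\le\|f_k\|_{\htd}$ directly; combined with your identities $\|f_k\|_{\htd}=\sqrt{N_k}\,\|f_k\|_{L^2(\sigma)}\le\sqrt{N_k}\,M r^{-k}$, this is marginally sharper than your bound $N_k\,M r^{-k}$, though both are summable and either suffices. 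Your route through the exact monomial norm $\|z^\alpha\|_{\cM_d}=\sqrt{\alpha!/|\alpha|!}$ (via the weighted-shift realization and the multinomial inequality $\prod_i\binom{\alpha_i+\beta_i}{\alpha_i}\le\binom{|\alpha|+|\beta|}{|\alpha|}$, which is a correct Vandermonde-type estimate) is a perfectly valid, self-contained substitute; the only cost is an extra factor of $\sqrt{N_k}$, which the exponential decay $r^{-k}$ absorbs. Note finally that the standing assumption $d<\infty$ in this section is what keeps $N_k=\binom{k+d-1}{d-1}$ finite and polynomially bounded, so your argument goes through exactly as written.
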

\begin{proof}
Since $(M_{z_1}, \ldots, M_{z_d})$ is a row contraction, this follows from the fact that the power series for every $f \in O((1+\epsilon)\mb{B}_d)$ 
converges absolutely at every point in $(1+\epsilon)\mb{B}_d$. 
\end{proof}

Recall the notation $\cA_V$ for the closure of the polynomials in $\cM_V$. 
\begin{lemma}
\label{lemd4a}
$\cA_V \subseteq \mv \cap C(\ol{V})$. 
\end{lemma}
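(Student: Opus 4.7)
The plan is to show that the closure of the polynomials in $\cM_V$ is in fact a closure in a stronger (or at least compatible) topology: uniform convergence on $\overline{V}$. Since each polynomial is already continuous on $\overline{\mb{B}_d}$, and in particular on $\overline{V}$, a Cauchy sequence in this uniform sense must produce a limit in $C(\overline{V})$ that agrees on $V$ with the original multiplier limit.

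First, I would record the pointwise bound
\[
|g(\lambda)| \ \leq \ \|g\|_{\cM_V} \qquad (\lambda \in V, \, g \in \cM_V),
\]
which is immediate from the reproducing kernel estimate $|g(\lambda)| = |\la g k_\lambda, k_\lambda\ra|/\|k_\lambda\|^2 \leq \|M_g\| = \|g\|_{\cM_V}$. Consequently the multiplier norm dominates the sup norm on $V$, so any $\cM_V$-norm Cauchy sequence is uniformly Cauchy on $V$.

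Next, take $f \in \cA_V$ and pick polynomials $p_n$ with $\|p_n - f\|_{\cM_V} \to 0$. For any $m,n$, the function $p_n - p_m$ is a polynomial, hence continuous on $\overline{\mb{B}_d}$, so
\[
\sup_{z \in \overline{V}} |p_n(z) - p_m(z)| \ = \ \sup_{z \in V} |p_n(z) - p_m(z)| \ \leq \ \|p_n - p_m\|_{\cM_V},
\]
where the first equality uses density of $V$ in $\overline{V}$ together with continuity of $p_n - p_m$. Thus $(p_n|_{\overline{V}})$ is Cauchy in $C(\overline{V})$ and converges uniformly to some $g \in C(\overline{V})$. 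On $V$ this limit equals $f$, so $f$ has a (necessarily unique) continuous extension $g$ to $\overline{V}$, proving $f \in \cM_V \cap C(\overline{V})$.

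There is essentially no obstacle: the whole argument rests on the elementary domination of the sup norm by the multiplier norm, plus the trivial fact that polynomials extend continuously to $\overline{\mb{B}_d}$. The only point worth being explicit about is the interpretation of $\cM_V \cap C(\overline{V})$ as those elements of $\cM_V$ (viewed as functions on $V$) admitting a continuous extension to $\overline{V}$; with that convention, the argument above completes the proof.
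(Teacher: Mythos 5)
Your argument is correct and is essentially the paper's own proof: both rest on the fact that the multiplier norm dominates the sup norm, so a norm-convergent sequence of polynomials is uniformly Cauchy on $V$ and hence on $\ol{V}$, yielding a continuous extension of the limit. You merely fill in the reproducing-kernel justification of the norm domination and the density step, which the paper leaves implicit.
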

\bp
Every $f \in \cA_V$ is the norm limit of a sequence $\{p_n\}$ of polynomials. Since the operator norm dominates the sup norm, the sequence $\{p_n\}$ converges uniformly on $V$. It follows that the sequence $\{p_n\}$ converges uniformly on $\overline{V}$. Hence $f$ can be extended continuously to all of $\overline{V}$.
\ep

We write $\MMM (\cA_V)$ for the space of multiplicative linear functionals of the algebra $\cA_V$. Since functions in $\cA_V$ are continuous on $\ol{V}$, it is clear that every $\lambda \in \ol{V}$ gives rise to an evaluation functional $\rho_\lambda$ given by 
\[
\rho_\lambda(f) = f(\lambda) \,\, , \,\, f \in \cA_V.
\]

\begin{theorem}\label{thm:character}
Assume that $V$ is a variety in $\mb{B}_d$ such that there exists a non-trivial ideal $I_V \subseteq \cA_d$ for which 
\[
\ol{V} = \{z \in \ol{\mb{B}_d} : \forall f \in I_V ,\,  f(z) = 0\}. 
\]
Then $\MMM(\cA_V)$ can be identified homeomorphically with $\overline{V}$ in the following way: to each $\rho \in \MMM(\cA_V)$ there is some $\lambda \in \overline{V}$ such that $\rho = \rho_\lambda$, and every $\lambda \in \overline{V}$ gives rise to a an evaluation functional. 
\end{theorem}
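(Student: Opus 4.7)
The plan is to construct an inverse to the evaluation map $\lambda \mapsto \rho_\lambda$ and then invoke a compactness argument. By Lemma \ref{lemd4a}, every $f \in \cA_V$ extends continuously to $\ol V$ and satisfies $\|f\|_\infty \le \|f\|_{\cA_V}$, so point evaluation at any $\lambda \in \ol V$ is a well-defined, contractive, multiplicative functional on $\cA_V$. Polynomials separate points of $\mb{C}^d$ and descend to elements of $\cA_V$, so $\lambda \mapsto \rho_\lambda$ is injective; it is also continuous from $\ol V$ into $\MMM(\cA_V)$ (weak-$*$ topology), since each $f \in \cA_V$ is continuous on $\ol V$.

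The heart of the argument is surjectivity. Given $\rho \in \MMM(\cA_V)$, set $\lambda := (\rho(z_1),\ldots,\rho(z_d))$. Complete contractivity of characters together with the fact that $(M_{z_1},\ldots,M_{z_d})$ is a row contraction forces $\lambda \in \ol{\mb{B}_d}$. Consider the restriction homomorphism $\pi : \cA_d \to \cA_V$ and pull $\rho$ back to the character $\tilde\rho := \rho \circ \pi$ on $\cA_d$. Each $g \in \cA_d$ is a norm-limit of polynomials, and the $\cA_d$-norm dominates the supremum over $\mb{B}_d$, so every $g \in \cA_d$ extends continuously to $\ol{\mb{B}_d}$, and by continuity $\tilde\rho$ is simply evaluation at $\lambda$ on all of $\cA_d$.

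Now the hypothesis enters: since any $f \in I_V \subseteq \cA_d$ vanishes on $\ol V \supseteq V$, we have $\pi(f) = 0$, so $f(\lambda) = \tilde\rho(f) = 0$ for every $f \in I_V$. The defining property of $I_V$ then forces $\lambda \in \ol V$. On the restrictions of polynomials, $\rho$ and $\rho_\lambda$ coincide; since these are dense in $\cA_V$ and both functionals are continuous, $\rho = \rho_\lambda$ throughout $\cA_V$. The topological conclusion is then automatic: a continuous bijection from the compact space $\ol V$ onto the Hausdorff space $\MMM(\cA_V)$ is a homeomorphism.

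The main obstacle I anticipate is the transfer of the vanishing condition on $I_V$ from $\cA_d$ down to the quotient: one has to route evaluation at a candidate $\lambda \in \ol{\mb{B}_d}$ through the pull-back $\tilde\rho = \rho \circ \pi$ in order to conclude that $f(\lambda) = 0$ for $f \in I_V$, and this is precisely why the hypothesis demands $I_V \subseteq \cA_d$ rather than merely $I_V \subseteq \cM_d$ (weak-$*$ continuous characters would not suffice on the quotient). Everything else is bookkeeping with density of polynomials and the elementary compact/Hausdorff argument.
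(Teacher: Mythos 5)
Your proof is correct and follows essentially the same route as the paper's: identify $\lambda=(\rho(z_1),\ldots,\rho(z_d))\in\ol{\mb{B}_d}$, lift $\rho$ through the restriction homomorphism $\cA_d\to\cA_V$ to see the lift is evaluation at $\lambda$, use the vanishing of $I_V$ to force $\lambda\in\ol V$, and conclude with density of polynomials plus the compact-to-Hausdorff continuous bijection argument. The only differences are cosmetic (you argue $\lambda\in\ol V$ directly where the paper argues by contradiction, and your extra remarks about why the lifted character is evaluation at $\lambda$ fill in a step the paper cites as well known).
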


\begin{proof}
Let $\lambda \in \overline{V}$. It is clear that $\lambda$ gives rise to an evaluation character $\rho_\lambda$ of $\mv \cap C(\ol{V})$. By  Lemma~\ref{lemd4a}, $\rho_\lambda$ restricts to a character on $\cA_V$.  

Conversely, let $\rho \in \MMM(\cA_V)$. Let $\lambda = (\lambda_1, \ldots, \lambda_d)$, where for all $i$ we put $\lambda_i = \rho(z_i)$. As $\rho$ is completely contractive, $\lambda \in \overline{\mb{B}}_d$. It is clear that the restriction of $\rho$ to $\mb{C}[z]$ must be equal to the evaluation functional $\rho_\lambda$ for this $\lambda$. We will show that $\rho$ must agree with $\rho_\lambda$ on $\cA_V$, and that $\lambda \in \ol{V}$. 

Since $(z_1\big|_V, \ldots, z_d\big|_V)$ is a row contraction that generates $\cA_V$, and since $\cA_d$ is the universal operator algebra generated by a row contraction \cite{arv98}, it follows that the restriction map is a homomorphism $q : \cA_d \rightarrow \cA_V$. Therefore $\rho$ lifts to a character $\hat{\rho} = \rho \circ q$ of $\cA_d$, which, as is well known, must be evaluation at the point $\lambda$. 

To show that $\lambda$ must be in $\overline{V}$, assume for contradiction that $\lambda \notin \ol{V}$. Then there exists some $f \in I_V$ such that $f \big|_{\ol{V}} \equiv 0$ while $f(\lambda) \neq 0$. But then we have 
\[
0 = \rho(f\big|_V) = \hat{\rho}(f) = f(\lambda) \neq 0.
\]
It follows that $\lambda$ has to be in $\ol{V}$ and that $\rho = \rho_\lambda$. 

Finally, the mapping $\overline{V} \ni \lambda \mapsto \rho_\lambda \in \MMM(\cA_V)$ is bijective and clearly weak-$*$ continuous, hence a homeomorphism. 
\end{proof}

\begin{cor}\label{cor:pi_character}
Assume that $V$ is a variety in $\mb{B}_d$ such that there exists a non-trivial ideal $I_V \subseteq \cA_d$ for which 
\[
\ol{V} = \{z \in \ol{\mb{B}_d} : \forall f \in I_V ,\, f(z) = 0\}. 
\] 
If $\pi : \MMM(\mv) \rightarrow \overline{\mb{B}_d}$ is the natural projection given by $\pi(\rho) = (\rho(z_1), \ldots, \rho(z_d))$, then $\pi(\MMM(\mv)) =  \ol{V}$. 
\end{cor}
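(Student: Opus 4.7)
The plan is to reduce the corollary to Theorem \ref{thm:character}, exploiting the natural inclusion $\cA_V \subseteq \mv$. The excerpt already notes that the containment $\pi(\MMM(\mv)) \supseteq \ol{V}$ holds in general; one sees this by taking evaluations at points of $V$ (which are \textsc{wot}-continuous characters of $\mv$) and then taking weak-$*$ cluster points in the compact space $\MMM(\mv)$ for boundary points $\lambda \in \ol{V} \setminus V$, using continuity of $\pi$ to transfer the limit. So the real task is the reverse inclusion $\pi(\MMM(\mv)) \subseteq \ol{V}$.

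For that direction, I would start with an arbitrary $\rho \in \MMM(\mv)$ and look at its restriction to the norm-closed unital subalgebra $\cA_V \subseteq \mv$. Since $\rho$ is unital, its restriction $\rho|_{\cA_V}$ is again unital, hence a nonzero multiplicative linear functional, so $\rho|_{\cA_V} \in \MMM(\cA_V)$. The hypothesis on $V$ is precisely the one needed to invoke Theorem~\ref{thm:character}, which identifies $\MMM(\cA_V)$ with $\ol{V}$ via point evaluation. Therefore $\rho|_{\cA_V} = \rho_\lambda$ for a unique $\lambda \in \ol{V}$.

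To finish, I would simply read off the coordinates: the coordinate functions $z_i$ restricted to $V$ belong to $\cA_V$, so
\[
\pi(\rho) = (\rho(z_1),\ldots,\rho(z_d)) = (\rho|_{\cA_V}(z_1),\ldots,\rho|_{\cA_V}(z_d)) = (\lambda_1,\ldots,\lambda_d) = \lambda \in \ol{V},
\]
giving $\pi(\MMM(\mv)) \subseteq \ol{V}$ as needed. Combined with the easy inclusion, this yields equality.

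I do not anticipate a substantive obstacle here — the argument is essentially a one-line application of Theorem~\ref{thm:character} once one remembers that restriction to a closed unital subalgebra sends characters to characters. The only point that requires mild care is verifying that $\cA_V$ really is a unital subalgebra of $\mv$ (immediate from the definition as the norm closure of the polynomials, which contain the constants), and that the restriction of a unital functional cannot become zero. Everything else is formal.
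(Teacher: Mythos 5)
Your argument is correct and is exactly the intended one: the paper states the corollary without proof, leaving it as an immediate consequence of Theorem~\ref{thm:character} via restriction of characters of $\mv$ to the closed unital subalgebra $\cA_V$, together with the easy inclusion $\pi(\MMM(\mv)) \supseteq \ol{V}$ noted in Section~2. Nothing to add.
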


\section{Unions of strongly disjoint varieties} 
\label{secf}

In this section we will consider varieties in $\mb{B}_d$ which decompose in a very controllable way, to obtain some positive results towards question (Q). 

\begin{definition}
Let $V_1, \ldots, V_k$ be varieties in $\mb{B}_d$. We will say that $V_1, \ldots, V_k$ are {\em strongly disjoint} if for all $i=1, \ldots, k$, there exists a function $\phi_i \in \cM_d$ such that $\phi_i \equiv 1$ on $V_i$ and $\phi_i \equiv 0$ on $V_j$ for $j \neq i$. 
\end{definition}

An easy example of strongly disjoint varieties is given in the following lemma. 

\begin{lemma}\label{lem:partition1}
Suppose that $V = V_1 \cup \ldots \cup V_k$ is such that for every $i$ there is some $\epsilon > 0$ and a variety (in the usual sense) $\tilde{V}_i$ in $(1+\epsilon)\mb{B}_d$, such that $V_i = \tilde{V}_i \cap \mb{B}_d$. Assume further that $\tilde{V}_i \cap \tilde{V}_j = \emptyset$ for all $i \neq j$. Then $V_1, \ldots, V_k$ are strongly disjoint. 
\end{lemma}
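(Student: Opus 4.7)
The plan is to combine Lemma~\ref{lem:O} with the standard Oka--Cartan extension theorem for holomorphic functions on analytic subvarieties of Stein domains. First I would replace the various $\epsilon$'s provided by the hypothesis by their minimum, so that all the $\tilde{V}_j$ can be regarded as pairwise disjoint analytic subvarieties of a single pseudoconvex (hence Stein) open set $\Omega := (1+\epsilon)\mb{B}_d$.

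Next, fixing $i \in \{1,\ldots,k\}$, I would consider the analytic subvariety $Y := \tilde{V}_1 \cup \cdots \cup \tilde{V}_k$ of $\Omega$ (a finite union of subvarieties is again a subvariety). By disjointness, the function $F_i$ on $Y$ defined to be $1$ on $\tilde{V}_i$ and $0$ on $\tilde{V}_j$ for $j\ne i$ is locally constant on $Y$, and in particular is holomorphic on $Y$. Since $\Omega$ is Stein, Cartan's Theorem B applied to the ideal sheaf of $Y$ in $\Omega$ yields surjectivity of the restriction map $O(\Omega)\to O(Y)$, so $F_i$ extends to some $\Phi_i \in O(\Omega) = O((1+\epsilon)\mb{B}_d)$.

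Finally I would apply Lemma~\ref{lem:O} to conclude that $\Phi_i \in \cA_d \subseteq \cM_d$. Setting $\phi_i := \Phi_i$ then produces a multiplier with $\phi_i \equiv 1$ on $V_i = \tilde{V}_i \cap \mb{B}_d$ and $\phi_i \equiv 0$ on $V_j = \tilde{V}_j \cap \mb{B}_d$ for $j \neq i$, establishing strong disjointness of $V_1,\ldots,V_k$.

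The only substantive step is the appeal to Cartan's Theorem B, which is what allows the locally constant function on the disjoint union $Y$ to be realized globally as a holomorphic function on $\Omega$. However, since $(1+\epsilon)\mb{B}_d$ is manifestly a domain of holomorphy, this is entirely standard and I do not anticipate any real obstacle; all remaining steps are bookkeeping.
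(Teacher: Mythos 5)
Your proposal is correct and follows essentially the same route as the paper: both define the locally constant indicator function on the union of the $\tilde{V}_i$ (holomorphic since the pieces are disjoint), extend it to $(1+\epsilon)\mb{B}_d$ by Cartan's extension theorem, and then invoke Lemma~\ref{lem:O} to place the restriction in $\cA_d \subseteq \cM_d$. The only addition is your explicit reduction to a common $\epsilon$, a bookkeeping step the paper leaves implicit.
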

\begin{proof}
Define $\phi_i$ by $\phi_i \equiv 1$ on $\tilde{V}_i$ and $\phi_i \equiv 0$ on $\tilde{V}_j$ for $j \neq i$. Then $\phi_i$ is holomorphic on $\tilde{V}$. By   Cartan's extension Theorem 
\cite[Theorem VIII.A.18]{gur} $\phi_i$ extends to a holomorphic function on $(1+\epsilon)\mb{B}_d$. By Lemma \ref{lem:O}, the restriction of $\phi_i$ to the unit ball is in $\cA_d \subset \cM_d$.
\end{proof}

Here is a stronger result.

\begin{lemma}\label{lem:partition2}
Let $V = V_1 \cup \ldots \cup V_k$, where $V_1, \ldots, V_k$ are varieties in $\mb{B}_d$ such that $\overline{V}_i \cap \overline{V}_j = \emptyset$ for all $i \neq j$. Assume further that there exist  non-trivial, finitely generated ideals $I_1, \ldots, I_k$ in $\cA_d$ for which 
\[
\ol{V_i} = \{z \in \ol{\mb{B}_d} : \forall f \in I_i , \, f(z) = 0\}. 
\] 
Then $V_1, \ldots, V_k$ are strongly disjoint. 
\end{lemma}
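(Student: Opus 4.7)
The plan is to construct the separating multipliers $\phi_i$ via the Shilov idempotent theorem applied to the commutative Banach algebra $\cA_V$, then lift them along the restriction map $\cM_d\to\cM_V$.

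First I would verify that Theorem~\ref{thm:character} applies to the full union $V=V_1\cup\cdots\cup V_k$, using the product ideal $I:=I_1I_2\cdots I_k\subseteq\cA_d$. If each $I_i$ is generated by $\{f_{i,\ell}\}_{\ell=1}^{n_i}$, then $I$ is generated by the $n_1\cdots n_k$ products $\prod_{i=1}^k f_{i,\ell_i}$, hence is finitely generated. A point $z\in\ol{\mb{B}_d}$ annihilates every such product exactly when some $I_i$ vanishes identically at $z$, i.e.\ when $z\in\ol{V_i}$ for some $i$; hence the common zero set of $I$ on $\ol{\mb{B}_d}$ is $\bigcup_i\ol{V_i}=\ol{V}$. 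Nontriviality is immediate: $I\neq 0$ because $\cA_d\subseteq O(\mb{B}_d)$ is an integral domain, and $I\neq\cA_d$ because $\ol{V}\neq\emptyset$. Theorem~\ref{thm:character} then produces a homeomorphism $\MMM(\cA_V)\cong\ol V$.

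Since $\ol{V_i}\cap\ol{V_j}=\emptyset$ for $i\neq j$, this identification exhibits $\MMM(\cA_V)$ as a disjoint union of the compact clopen subsets $\ol{V_1},\ldots,\ol{V_k}$. I would then invoke the Shilov idempotent theorem (applicable since $\cA_V$ is a unital commutative Banach algebra) to produce, for each $i$, an idempotent $e_i\in\cA_V$ whose Gelfand transform equals $\chi_{\ol{V_i}}$. Because characters of $\cA_V$ are precisely the point evaluations at elements of $\ol V$, this translates directly to $e_i(\lambda)=1$ for every $\lambda\in V_i$ and $e_i(\lambda)=0$ for every $\lambda\in V_j$ with $j\neq i$.

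Finally, by the very definition $\cM_V=\{f|_V:f\in\cM_d\}$, the restriction map $\cM_d\to\cM_V$ is surjective, so any preimage $\phi_i\in\cM_d$ of $e_i\in\cA_V\subseteq\cM_V$ satisfies $\phi_i|_{V_i}\equiv 1$ and $\phi_i|_{V_j}\equiv 0$ for $j\neq i$, which is exactly strong disjointness. The main obstacle I foresee is the hypothesis check for Theorem~\ref{thm:character} on the union $V$—specifically the identification of the common zero set of $I_1\cdots I_k$ with $\ol V$; once that is settled, everything else follows mechanically from Shilov's theorem and the surjectivity of $\cM_d\to\cM_V$.
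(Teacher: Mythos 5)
Your proof is correct, but it takes a genuinely different route from the paper's. The paper works entirely upstairs in $\cA_d$: since the generators $\psi_{1,n},\psi_{2,n}$ of $I_1$ and $I_2$ have no common zero in $\ol{\mb{B}_d}$, they cannot all lie in a single maximal ideal of $\cA_d$ (whose characters are point evaluations on $\ol{\mb{B}_d}$), so they generate the unit ideal; a Bezout identity $1=\sum_n g_{1,n}\psi_{1,n}+\sum_n g_{2,n}\psi_{2,n}$ then hands you $\phi_1=\sum_n g_{2,n}\psi_{2,n}$ and $\phi_2$ directly, and the case $k>2$ is reduced to $k=2$ by taking products $\phi_i=\prod_{j\neq i}\psi_{ij}$. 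You instead work downstairs in $\cA_V$: you verify the hypothesis of Theorem~\ref{thm:character} for the union $V$ via the product ideal $I_1\cdots I_k$ (your zero-set computation is the key check, and it is right), identify $\MMM(\cA_V)$ with the disconnected compact set $\ol{V_1}\sqcup\cdots\sqcup\ol{V_k}$, and invoke the Shilov idempotent theorem before lifting through the surjection $\cM_d\to\cM_V$. Both arguments rest on the same character-space input; the trade-offs are that your version treats all $k$ components in one step, produces honest idempotents in $\cA_V$, and in fact never uses finite generation of the $I_i$ (non-triviality of the product ideal suffices), whereas the paper's version is more elementary --- it avoids the holomorphic functional calculus behind Shilov's theorem and exhibits the $\phi_i$ explicitly as elements of $\cA_d$ built from the given generators. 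One small point you leave implicit: you should note that the finite union $V$ is itself a variety (a joint zero set, e.g.\ of the products of multipliers cutting out the $V_i$) and that $\ol{V_i}\cap\mb{B}_d=V_i$, so that Theorem~\ref{thm:character} indeed applies to $V$; neither observation is difficult.
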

\bp
If $k=1$ there is nothing to prove, so let us consider $k=2$ first. For each $V_i$, there are finitely many functions $\psi_{i,n}$ 
such that $\ol{V}_i$ is the common zero set. 
If  $\ol{V}_1 \cap \ol{V}_2$ is empty, then the functions $\psi_{i,n}$ can't all lie in a maximal ideal (as this would have to be in a fiber over some point in  $\ol{V}_1 \cap \ol{V}_2$, by Theorem \ref{thm:character}), so the algebra they generate is all of $\cA_d$. Therefore one can find $g_{i,n} \in \cA_d$ such that
\[
1 = \sum_{n} g_{1,n} \psi_{1,n} + \sum_{n} g_{2,n} \psi_{2,n} .
\]
Define $\phi_1 =  \sum_{n} g_{2,n} \psi_{2,n}$ and $\phi_2 =  \sum_{n} g_{1,n} \psi_{1,n}$. Since $\psi_{i,n}$ vanishes on $V_{i}$, the functions $\phi_1, \phi_2$ satisfy the definition of strong disjointness. 

Suppose now that $k>2$. By the case $k=2$ which we just proved, for each $i \neq j$, there exist functions
$\psi_{ij}$ that are $1$ on $V_i$ and $0$ on $V_j$.
Then the functions $\phi_i = \Pi_{j \neq i} \psi_{ij}$ satisfy the definition of strong disjointness.
\ep

\begin{theorem}\label{thm:restrictions}
Let $V = V_1 \cup \ldots \cup V_k$, where $V_1, \ldots, V_k$ are strongly disjoint varieties in $\mb{B}_d$. Then $f \in \cM_V$ if and only if for all $i=1, \ldots, k$, the restriction $f\big|_{V_i}$ is in $\cM_{V_i}$. 
\end{theorem}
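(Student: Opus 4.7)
The plan is to handle the two directions separately; neither should present a real obstacle once the strong disjointness multipliers $\phi_1,\dots,\phi_k$ are in hand.

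For the forward direction, I would argue directly from the definition of $\cM_V$. If $f \in \cM_V$, then by definition $f = F\big|_V$ for some $F \in \cM_d$. Since $V_i \subseteq V$, we have $f\big|_{V_i} = F\big|_{V_i}$, which is by definition an element of $\cM_{V_i}$. No further work is needed.

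For the converse, the idea is a partition-of-unity style patching argument using the functions $\phi_1, \ldots, \phi_k \in \cM_d$ provided by strong disjointness. Suppose $f\big|_{V_i} \in \cM_{V_i}$ for each $i$, so that there exist $F_i \in \cM_d$ with $F_i\big|_{V_i} = f\big|_{V_i}$. I would then set
\[
F \ := \ \sum_{i=1}^k \phi_i F_i .
\]
Since $\cM_d$ is an algebra, $F \in \cM_d$. For any $z \in V_i$ we have $\phi_i(z) = 1$ and $\phi_j(z) = 0$ for $j \neq i$, so
\[
F(z) \ =\ \sum_{j=1}^k \phi_j(z) F_j(z) \ =\ F_i(z) \ =\ f(z).
\]
This holds for every $z \in V_i$ and every $i$, so $F\big|_V = f$, which means $f \in \cM_V$ as required.

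I do not anticipate a substantive obstacle: the entire content of the theorem is packaged into the existence of the $\phi_i$'s (which has been established in Lemmas \ref{lem:partition1} and \ref{lem:partition2} for the relevant geometric situations), and the computation above is mechanical. The only point worth double-checking is that the sum and products appearing in $F$ stay inside $\cM_d$, which is immediate from the algebra structure.
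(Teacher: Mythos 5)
Your proposal is correct and follows essentially the same route as the paper's own proof: the forward direction is immediate from $\cM_V = \cM_d\big|_V$, and the converse patches the local extensions $F_i$ together via $F = \sum_i \phi_i F_i$ exactly as the paper does. No gaps.
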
 
\begin{proof}
Since $\cM_W = \cM_d \big|_W$ for every variety $W$, the ``only if" implication is obvious. Assume therefore that for all $i=1, \ldots, k$, $f\big|_{V_i} \in \cM_{V_i}$. For all $i$, let $f_i \in \cM_d$ satisfy $f_i\big|_{V_i} = f\big|_{V_i}$. Let $\phi_1, \ldots, \phi_k \in \cA_d$ be as in the definition of strong disjointness . Then $F = f_1 \phi_1 + \ldots f_k \phi_k \in \cM_d$, and the restriction of $F$ to $V$ equals $f$. Thus $f \in \cM_V$. 
\end{proof}

\begin{theorem}
Let $V = V_1 \cup \ldots \cup V_k$ and $W = W_1 \cup \ldots \cup W_k$ be two varieties such that the $V_1, \ldots, V_k$ and $W_1, \ldots, W_k$ are strongly disjoint. If $\cM_{V_i}$ is isomorphic to $\cM_{W_i}$ for all $i=1, \ldots, k$, then $\cM_V$ is isomorphic to $\cM_W$. 
\end{theorem}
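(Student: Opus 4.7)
The plan is to reduce the statement to a direct product decomposition of the multiplier algebras and then glue the component isomorphisms.  The key structural observation is that Theorem~\ref{thm:restrictions} already does most of the work: it says that, under strong disjointness, a function on $V = V_1 \cup \cdots \cup V_k$ is a multiplier if and only if each of its restrictions is a multiplier.  I would first upgrade this to an algebra isomorphism
\[
R_V : \cM_V \longrightarrow \prod_{i=1}^k \cM_{V_i}, \qquad R_V(f) = \bigl(f|_{V_1},\ldots,f|_{V_k}\bigr).
\]
Injectivity is immediate because $V = \bigcup V_i$, so $f|_{V_i} = 0$ for all $i$ forces $f = 0$.  For surjectivity one uses the strong-disjointness functions $\phi_1^V,\ldots,\phi_k^V \in \cM_d$: given arbitrary $g_i \in \cM_{V_i}$, lift each to $\tilde g_i \in \cM_d$ (using $\cM_{V_i} = \cM_d|_{V_i}$) and set $F := \sum_i \tilde g_i \phi_i^V \in \cM_d$; then $F|_{V_j} = g_j$ for every $j$, so $F|_V \in \cM_V$ and $R_V(F|_V) = (g_1,\ldots,g_k)$.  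This is essentially the argument already carried out inside the proof of Theorem~\ref{thm:restrictions}.  That $R_V$ is multiplicative is obvious since restriction is a homomorphism.

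Applying the same reasoning to $W$ produces an algebra isomorphism $R_W : \cM_W \to \prod_{i=1}^k \cM_{W_i}$.  Given the hypothesised isomorphisms $\alpha_i : \cM_{V_i} \to \cM_{W_i}$, I would take their componentwise product
\[
\tilde\alpha := \alpha_1 \times \cdots \times \alpha_k : \prod_{i=1}^k \cM_{V_i} \longrightarrow \prod_{i=1}^k \cM_{W_i},
\]
which is plainly an algebra isomorphism, and then define
\[
\alpha := R_W^{-1} \circ \tilde\alpha \circ R_V : \cM_V \longrightarrow \cM_W.
\]
Being a composition of algebra isomorphisms, $\alpha$ is itself an algebra isomorphism, and both $\alpha$ and $\alpha^{-1}$ are automatically norm continuous by the semisimplicity argument mentioned at the start of Section~\ref{secb}.

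There is no real obstacle here: the only content beyond bookkeeping is the surjectivity of the restriction map onto the full direct product, and that is exactly what strong disjointness buys us via Theorem~\ref{thm:restrictions}.  The mild subtlety worth flagging is that the $\phi_i^V$ are only used to \emph{construct} preimages under $R_V$; they are not needed to transport the $\alpha_i$, which is why no compatibility condition on the $\phi_i^V$ versus the $\phi_i^W$ under $\alpha_i$ is required.
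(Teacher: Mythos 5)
Your proof is correct, but it takes a genuinely different route from the paper's. The paper first invokes Theorem~\ref{thmb1} to realize each hypothesized isomorphism $\cM_{V_i}\cong\cM_{W_i}$ as composition with a biholomorphism $H_i:W_i\to V_i$, glues these into a single map $h:W\to V$, and then uses Theorem~\ref{thm:restrictions} to verify that $f\mapsto f\circ h$ is an isomorphism of $\cM_V$ onto $\cM_W$. You instead bypass the biholomorphisms entirely: you promote Theorem~\ref{thm:restrictions} to a direct product decomposition $\cM_V\cong\prod_i\cM_{V_i}$ via the restriction map (with surjectivity supplied by the strong-disjointness functions $\phi_i$), and transport the $\alpha_i$ componentwise. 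Your argument is more elementary and, notably, more self-contained: the paper's route silently imports the hypotheses of Theorem~\ref{thmb1} (that $d<\infty$ and that each $V_i$, $W_i$ is a finite union of irreducible or discrete varieties), which do not appear in the statement of the theorem, whereas your argument needs none of this and works for abstract algebra isomorphisms $\alpha_i$ of any provenance. What the paper's approach buys in exchange is the explicit form of the resulting isomorphism as a composition operator $f\mapsto f\circ h$, which is what the Remark following the theorem leans on; with your construction one would have to apply Theorem~\ref{thmb1} \emph{a posteriori} to the isomorphism $\alpha$ to recover that structure. One small point worth making explicit in your write-up: the direct products $\prod_i\cM_{V_i}$ are only used as algebraic intermediaries, and the resulting $\alpha$ is automatically norm continuous by the semisimplicity observation from Section~\ref{secb}, as you note.
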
 
\begin{proof}
By Theorem \ref{thmb1}, for all $i=1, \ldots, k$ there is a biholomorphism $H_i : W_i \rightarrow V_i$ with $\cM_d$ components, such that the isomorphism $\cM_{V_i} \cong \cM_{W_i}$ is implemented by $f \mapsto f \circ H_i$. Let $h$ be defined on $W$ by $h\big|_{W_i} = H_i$. Since for all $f \in \cM_V$ we have that $f \circ h \big|_{W_i} = f\big|_{V_i} \circ H_i$, we conclude from Theorem \ref{thm:restrictions} that $f \mapsto f \circ h$ implements an isomorphism of $\cM_V$ onto $\cM_W$. 
\end{proof}

\begin{remark}
It now follows from Theorem \ref{thmb1} that the function $h$ defined in the proof above can be extended to a function with components in $\cM_d$. 
\end{remark}

\bibliography{references}

\end{document}